\def\esp{\mathbb{E}}
\def\pr{\mathbb{P}}
\def\var{\mathrm{Var}}
\def\N{\mathbb{N}}
\def\F{\mathcal{F}_n}
\def\R{\mathbb{R}}
\def\x{\boldsymbol{x}}
\def\TV{\tiny \textsc{TV}}
\newtheorem{thm}{Theorem}[section]
\newtheorem{lem}{Lemma}[section]
\newtheorem{exe}{Example}[section]
\newtheorem{rem}{Remark}[section]
\newtheorem{prop}{Proposition}[section]
\newtheorem{defi}{Definition}[section]
\newtheorem*{assum}{Assumptions}
\newcommand{\dd}{\mathop{}\!\mathrm{d}}
\begin{document}
 
\title{Slow recurrent regimes for a class of one-dimensional stochastic growth models}
\author{Etienne Adam \thanks{CMAP, Ecole Polytechnique, CNRS, Universit\'e Paris-Saclay, route de Saclay, 91128 Palaiseau. France. Email : etienne.adam@polytechnique.edu}}

\maketitle
\begin{abstract}
 We classify the possible behaviors of a class of one-dimensional stochastic recurrent growth models. In our main result, we obtain nearly optimal bounds for the tail of hitting times of some compact sets. If the process is an aperiodic irreducible Markov chain, we determine whether it is null recurrent or positive recurrent and in the latter case, we obtain a subgeometric convergence of its transition kernel to its invariant measure. We apply our results in particular to state-dependent Galton-Watson processes and we give precise estimates of the tail of the extinction time.
\end{abstract}
\textit{Keywords} : Markov chains; Hitting-times; recurrence classification; Lyapunov function; stochastic difference equation; \\
2010 Mathematics Subject Classification: Primary 60J10, Secondary 60J80
\tableofcontents
\section{Introduction and main result}
\subsection{Introduction}
We consider a stochastic growth model $(X_n)_{n \in \N}$, taking values in $\mathcal{X}$, an unbounded subset of $\R_+$, and satisfying a stochastic difference equation of the form
\begin{equation}\label{stochasticdifference}
 X_{n+1}=X_n+g\left(X_n\right)+\xi_n,
\end{equation}
where $g$ is a given function and $(\xi_n)_{n\in\N}$ is a sequence of random variables such that almost surely, 
\[
\esp\left( \xi_n \big| \F\right)=0,
\]
\[
\esp\left(\xi_n^2 \big| \F \right)=\sigma^2\left(X_n\right)< \infty,
\]
for some positive function $\sigma^2(x)$. The filtration  $(\F)_{n \in \N}$ is such that $(X_n)_{n \in \N}$ is $\F$ measurable for all $n\in\N$.

Provided that the following limit exists
\[
      \theta=\underset{x \rightarrow \infty}{\lim} \frac{2xg\left(x\right)}{\sigma^2\left(x\right)},
\]
and belongs to $(-\infty,1)$, Kersting proved in \cite{kersting} that $\pr(\{X_n \underset{n \rightarrow \infty}{\longrightarrow}\infty\})=0$ and said that $\left(X_n\right)_{n\in \N}$ is recurrent, adopting the terminology from Markov chain theory, whereas if $\theta \in (1,\infty)$ then $\pr(\{X_n \underset{n \rightarrow \infty}{\longrightarrow}\infty\})>0$. A similar criterion for the multidimensional case was recently given in \cite{adam}.

The aim of this article is to determine how quickly the process $\left(X_n\right)_{n\in \N}$, started from $x>A$, goes into the interval $[0,A]$, where $A>0$ is arbitrary. If $\left(X_n\right)_{n\in \N}$ is an aperiodic irreducible Markov chain, we deduce therefrom a criterion of positive recurrence and how fast $\left(X_n\right)_{n\in \N}$ converges to its invariant measure. Moreover, if we have in mind population models,  where a natural assumption is the dichotomy property, \textit{i.e.},
\[
\pr\left(\left\{X_n \underset{n \rightarrow \infty}{\longrightarrow}\infty\right\}\right)+\pr\left(\left\{\exists n \text{ such that }X_n=0\right\}\right)=1, 
\]
we obtain precise estimates of the tail of the extinction time.

The first key ingredient of this article is to consider power functions as Lyapunov functions for growth models. Kersting \cite{kersting} proved recurrence and transience of growth models by using the logarithm as a Lyapunov function. However, we cannot get more information on the behaviour of $\left(X_n\right)_{n \in \N}$ with this function. Considering power functions yields an inequality of the form 
\[
 \esp\left(X_{n+1}^\alpha \big| \F\right) -X_n^\alpha \leq -C X_n^{\alpha-1}g(X_n)+b\mathds{1}_{\{X_n\leq A\}},
\]
for all $n \in \N$, where $\alpha \in (0,1)$, $A,C$ and $b$ some positive constants. From this equation, we deduce that
\begin{equation}\label{keyequation}
 \esp\left(f(Y_{n+1}) \big| \F\right) -f(Y_n) \leq -Cf'(Y_n)+b\mathds{1}_{\{Y_n\leq A\}},
\end{equation}
where $Y_n$ is a transform of $X_n$, $f$ is an increasing function, $A,C$ and $b$ some positive constants. Inequality \eqref{keyequation} enables us to give all possible behaviors of our class of recurrent growth models. In a series of papers \cite{aspandiiarov2, aspandiiarov3, aspandiiarov}, Aspandiiarov and al. proved upper and lower bounds for the tail of hitting-time into compact sets, for processes verifying some conditions, improving previous results of Lamperti \cite{lamperti2}. The second key ingredient, is to apply these results on a transform  $Y_n=G(X_n)$ of our process to get an upper bound of hitting-time into compact sets. If $(X_n)_{n\in\N}$ is an aperiodic irreducible Markov chain, we give a criterion for null recurrence or positive recurrence. Moreover, if $(X_n)_{n\in \N}$ is positive recurrent, we obtain from \cite{aspandiiarov3} in the countable state space, from \cite{douc} in a general state space, subgeometric rate of convergence to its invariant probability measure. Thus, we give a complete classification of behaviours of stochastic recurrent growth processes of the form \ref{stochasticdifference}. By applying our results, we deduce nearly optimal upper and lower bounds of the tail of the extinction time of state-dependent Galton-Watson processes that seem to have never been studied before, to the best of our knowledge. We also recover a weaker version of results of Zubkov \cite{zub} on the return time to zero of critical Galton-Watson process with immigration, but without using probability generating functions.

The article is organized as follows. In the next subsection, our main results Theorem \ref{bound} and Theorem \ref{markov} are stated. Then, in Section 2 we state and prove a series of lemmas needed for the proof of Theorems \ref{bound} and \ref{markov}. Section 3 is devoted to the proof of Theorem \ref{bound}. In Section 4, we consider that $(X_n)_{n\in \N}$ is an aperiodic irreducible Markov chain and we prove Theorem \ref{markov}. In section 5, we give various examples, in particular extinction time of state-dependent Galton-Watson processes. In the last section, we prove a key lemma for the lower bound of Theorem \ref{bound} and we recall some results from \cite{aspandiiarov3} that we use throughout this article.

\subsection{Main results}
We list the assumptions we need to formulate our main results.
\begin{assum}
\leavevmode\\
(A1) The function $g$ is positive, differentiable and $g\left(x\right)=o\left(x\right)$ when $x$ tends to infinity.\\
(A2) There exist $M>0$, $c_1>0$ and $\varepsilon>0$, such that for all $x>M$, for all $y>\left(1-\varepsilon\right)x$, 
\begin{equation}\label{xg}
xg\left(x\right)\leq c_1 yg\left(y\right).
\end{equation}
(A3)There exists $\delta>0$ such that for all $n \in \N$, $\esp\left(|\xi_n|^{4+\delta}\big| \F\right)\leq C\sigma^{4+\delta}\left(X_n\right)$.
\end{assum}
Let us comment on these assumptions.\\
Assumption (A1) precludes $X_n$ from growing geometrically, we focus on a kind of critical case where $X_n$ is perturbed by a drift $g\left(X_n\right)$. If the function $g$ is defined on a discrete subset of $\R_+$ then we consider a differentiable continuation of $g$. Assumption (A2) is rather technical, it encodes a non-decreasing property for the function $xg\left(x\right)$. We use it in Section 6 for the proof of Lemma \ref{verif}. If we consider $g\left(x\right)=x^\alpha$, then this simply means that $\alpha \in [-1,1)$. In \cite{kersting}, Kersting needs the existence of $2+\delta$-moments, to prove the recurrence of $X_n$. For technical reasons, detailed in Remark \ref{delta2} in Section \ref{section:lemma-proof}, we need the existence of $(4+\delta)$-moments to obtain the bounds of Theorem \ref{bound}.\\
Before stating the theorem, we introduce two transforms. Let 
\[
 G\left(x\right)=\int_1^x \frac{\dd y}{g\left(y\right)},
\]
and for $\alpha>0$, let
\[
 \ell_\alpha=\left(G^{-1}\left(x\right)\right)^\alpha.
\]
\begin{thm}\label{bound}
Besides (A1), (A2), (A3),  assume that there exist $\lambda>0$ and $\theta \in \left(0,1\right)$ such that 
\begin{equation}\label{g'g}
\underset{x \rightarrow \infty}{\lim}\frac{g'\left(x\right)x}{g\left(x\right)}=1-\lambda
\end{equation}
and
\begin{equation}\label{deftheta}
\underset{x \rightarrow \infty}{\lim} \frac{2xg\left(x\right)}{\sigma^2\left(x\right)}=\theta. 
\end{equation}
Then, there exists $A>0$ such that for all $x_0\in \mathcal{X} \cap (A, \infty)$, for all $\alpha$ and $\beta$ such that $0<\alpha<1-\theta<\beta$, there exist two constants $C_\alpha$ and $C_\beta$ such that for all $n \in \N$, 
\begin{equation}\label{thm}
 \frac{C_\beta}{\ell_\beta\left(n\right)}\leq \pr_{x_0}\left(\tau_A>n\right)\leq \frac{C_\alpha}{\ell_\alpha\left(n\right)},
\end{equation}
with $\tau_A=\inf\left\{n \in \N : X_n\leq A\right\}$.
\end{thm}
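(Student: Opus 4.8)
The plan is to transport the problem to the process $Y_n = G(X_n)$, where $G(x)=\int_1^x \dd y/g(y)$, and then apply the Aspandiiarov–Lamperti-type estimates of \cite{aspandiiarov, aspandiiarov3} (recalled in Section 6) to $Y_n$. First I would verify the two-sided drift estimate \eqref{keyequation}. Taking a power function $x^\alpha$ as a Lyapunov function for $X_n$ and using a Taylor expansion of $(X_n+g(X_n)+\xi_n)^\alpha$ around $X_n$, together with (A1), the moment hypothesis (A3), and the asymptotics \eqref{deftheta} relating $\sigma^2(x)$ to $xg(x)$, one obtains
\[
 \esp\left(X_{n+1}^\alpha \big| \F\right) -X_n^\alpha = -\frac{\alpha(1-\theta-\alpha)}{2}\,\frac{\sigma^2(X_n)}{X_n^{2-\alpha}}\,(1+o(1)) + O\!\left(\mathds{1}_{\{X_n\le A\}}\right),
\]
so that the sign of the drift is governed by whether $\alpha$ is below or above $1-\theta$. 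Rewriting $x^\alpha = \ell_\alpha(G(x))$ and noting $f=\ell_\alpha$ satisfies $f'(G(x)) = \alpha x^{\alpha-1}g(x)$, this becomes exactly an inequality of the form \eqref{keyequation} for $Y_n$, with the crucial feature that the "cost rate" is $f'(Y_n)$, a regularly varying function of $Y_n$. I would then check that $Y_n$ has the regularity required by the Aspandiiarov framework: bounded (or suitably controlled) jumps, which follows from (A1) and (A3) after the change of variables $G$, and the two-sided polynomial-type drift conditions, which follow from running the above computation for $\alpha$ slightly below and slightly above $1-\theta$.

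The second step is bookkeeping on the function $\ell_\alpha$. From \eqref{g'g}, $g$ is regularly varying of index $1-\lambda$, hence $G$ is regularly varying of index $\lambda$ and $G^{-1}$ of index $1/\lambda$, so $\ell_\alpha(x) = (G^{-1}(x))^\alpha$ is regularly varying of index $\alpha/\lambda$. The Aspandiiarov-type upper bound gives $\pr_{x_0}(\tau_A>n) \le C n^{-p}$ for an exponent $p$ determined by the ratio of the drift coefficient to the jump-variance coefficient of $Y_n$; the point of the $G$-transform is that this exponent comes out as $\alpha/\lambda$ up to the $1+o(1)$ errors, matching $1/\ell_\alpha(n)$. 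Symmetrically, the lower bound from \cite{aspandiiarov3} (together with the key lemma proved in Section 6) yields $\pr_{x_0}(\tau_A>n)\ge C n^{-q}$ with $q = \beta/\lambda$ for $\beta>1-\theta$, i.e.\ $\ge C_\beta/\ell_\beta(n)$. Choosing $A$ large enough that all the $o(1)$ terms above are dominated (this is where the single threshold $A$, uniform in $x_0>A$, comes from) closes the argument.

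The main obstacle I expect is twofold. First, the drift computation must be done carefully enough to keep the exact constant $\alpha(1-\theta-\alpha)/2$ (or at least its sign and the fact that it is bounded away from $0$ uniformly for $\alpha$ in a compact subinterval of $(0,1-\theta)$), because the Aspandiiarov bounds are sensitive to the precise ratio of drift to variance — this is exactly where the $(4+\delta)$-moment assumption (A3) is needed rather than merely $2+\delta$, to control the remainder in the Taylor expansion of $x\mapsto x^\alpha$ after the change of variables. Second, one must check the hypotheses of the cited hitting-time theorems for $Y_n=G(X_n)$, in particular that the increments of $Y_n$ are not too heavy-tailed and that the drift and variance of $Y_n$ are genuinely regularly varying with the claimed indices; assumption (A2), via Lemma \ref{verif}, is what guarantees the requisite monotonicity/comparability of $xg(x)$ so that these regular-variation statements survive the transform. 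Once those verifications are in place, the two bounds in \eqref{thm} follow by directly invoking the external results, with $C_\alpha, C_\beta$ absorbing the constants from Section 6 and the dependence on $x_0$ entering only through the initial condition in those theorems.
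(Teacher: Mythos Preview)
Your overall strategy matches the paper's: compute the drift of $X_n^\alpha$ by Taylor expansion (this is Lemma~\ref{martingale}), rewrite it as a drift condition for $\ell_\alpha(Y_n)$ with $Y_n=G(X_n)$, and then invoke the Aspandiiarov-type results. Two points are worth sharpening, though.

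First, the upper bound in the paper does \emph{not} proceed by reading off an exponent from ``the ratio of the drift coefficient to the jump-variance coefficient of $Y_n$''. Instead, one feeds the test function $f=\ell_\alpha$ (for $\alpha<1-\theta$) directly into Theorems~\ref{aspan2} or~\ref{aspan3}, obtaining $\esp_{x_0}(\ell_\alpha(\tau_A))<\infty$, and then applies Chebyshev. This requires a case split you do not mention: $\ell_\alpha$ is ultimately convex when $\alpha>\lambda$ and ultimately concave when $\alpha<\lambda$ (Lemma~\ref{lemmapre}), and the two cases call on different Aspandiiarov theorems; in the concave case one must also verify the integrability condition~\eqref{integral}, which is done in Proposition~\ref{upperboundconcave} by comparing $\ell_\gamma$ with $\ell_\eta$ for $\gamma<\eta<1-\theta$.

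Second, for the lower bound the paper does not simply cite a lower-bound theorem. It uses the submartingale property of $X_{n\wedge\tau_A}^\beta$ for $\beta>1-\theta$ (part~(ii) of Lemma~\ref{martingale}) together with Lemma~\ref{lemfonda} applied to $Y_n$, and then \emph{rewrites} the proof of Theorem~1 of \cite{aspandiiarov2} because the jumps of $Y_n$ are not bounded --- this is where the H\"older argument and the $(4+\delta)$-moment assumption genuinely enter (see the proof of Lemma~\ref{verif} and inequality~\eqref{ineq} in Proposition~\ref{lowerbound}). Your phrase ``bounded (or suitably controlled) jumps, which follows from (A1) and (A3)'' hides the real work here. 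Assumption~(A2) is used specifically inside Lemma~\ref{verif} to control $g'(V_n)/g^2(V_n)$ when $V_n$ lies between $X_n$ and $X_{n+1}$, not merely to ensure regular variation.
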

\begin{rem}
We prove the upper bound in \eqref{thm} by showing that $\esp_{x_0}(\ell_\alpha(\tau_A))<\infty$ for all $0<\alpha<1-\theta$ and $x_0 \in \mathcal{X}$. An easy consequence of this lower bound is that $\esp_{x_0}(\ell_\beta(\tau_A))=\infty$ for all $\beta>1-\theta$ and $x_0\in \mathcal{X} \cap (A, \infty)$. We cannot determine if $\esp_{x_0}(\ell_{1-\theta}(\tau_A))$ is finite or not.
\end{rem}
\begin{rem}
 In the proof of the theorem, we get explicit constants $C_\alpha$ and $C_\beta$ and in particular, the dependence of these constants on $x_0$.
\end{rem}
If $(X_n)_{n\in \N}$ is an aperiodic irreducible Markov chain, we determine when it is positive recurrent and the rate of convergence to the invariant probability measure. We denote by $P\left(.,.\right)$ the transition kernel of the Markov chain $\left(X_n\right)_{n\in \N}$. We deal with both countable state space and general state space. 
\begin{assum}
\leavevmode\\
(A4) $(X_n)_{n \in \N}$ is an aperiodic irreducible Markov chain taking values in a countable set $\mathcal{X}\subset \R_+$, such that for all $A>0$, $[0,A] \cap \mathcal{X}$ is finite.\\
(A4') $(X_n)_{n \in \N}$ is an aperiodic $\psi$-irreducible Markov chain taking values in a general state space $\mathcal{X}\subset \R_+$ and level sets $[0,A] \cap \mathcal{X}$ are petite sets for all $A>0$.
\end{assum}
We recall the definition of $\psi$-irreducibility (see \cite[p.84]{meyn}) :\\
We say that a Markov chain $\left(X_n\right)_{n\in \N}$ is $\psi$-irreducible if there exists a non trivial measure $\psi$ such that for all set $K\subset \mathcal{X}$,
\begin{equation}\label{irreductible}
 \psi\left(K\right)>0 \Rightarrow \pr_x\left(\exists n \text{ such that } X_n \in K\right)>0,
\end{equation}
and for all measures $\varphi$ satisfying \eqref{irreductible}, $\varphi$ is absolutely continuous with respect to $\psi$.
\begin{thm}\label{markov}
 Let us assume (A1), (A2), (A3) and (A4) or (A4') hold. Let $\lambda$ and $\theta$ be as in Theorem \ref{bound}.\\
Then $(X_n)_{n\in\N}$ is Harris-recurrent. Moreover \\
i) If $\lambda>1-\theta$, then $(X_n)_{n \in \N}$ is null recurrent.\\
ii) If $\lambda<1-\theta$, then $(X_n)_{n \in \N}$ is positive recurrent. Denote by $\pi$ its invariant probability measure. Then for all $\alpha \in (\lambda,1-\theta)$ and, if (A4) holds then for all probability measure $\nu$ on $\mathcal{X}$ such that 
\[
 \esp_\nu(\ell_\alpha'(\tau_A))<\infty,
\]
we have
\begin{equation}\label{markovcountable}
 \underset{n \rightarrow \infty}{\lim}\ell_\alpha'(n) \left\| \nu P^n - \pi \right\|_{\text{\TV}}=0,
\end{equation}
and if (A4') holds then for all $x\in \mathcal{X}$
\begin{equation}\label{markovgeneral}
\underset{n \rightarrow \infty}{\lim} \ell_\alpha'(n) \left\| P^n\left(x,.\right) - \pi(.) \right\|_{\text{\TV}}=0.
\end{equation}
\end{thm}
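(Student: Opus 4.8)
The plan is to combine the two-sided control of $\tau_A$ furnished by Theorem~\ref{bound} with the Foster--Lyapunov inequality \eqref{keyequation}, which I read as a subgeometric drift condition for the Markov chain $Y_n=G(X_n)$. Since $g>0$ and $g(x)=o(x)$ (so that $G(x)\to\infty$), $G$ is an increasing homeomorphism onto its image, and $(Y_n)$ inherits aperiodicity, irreducibility and the finiteness/petiteness of the level sets from (A4) or (A4'); in particular $\{Y_n\le G(A)\}$ corresponds to a petite set, which I call $C$. I shall also use repeatedly that \eqref{g'g} makes $g$ regularly varying of index $1-\lambda$, so by Karamata's theorem $G$ is regularly varying of index $\lambda$, $G^{-1}$ of index $1/\lambda$, $\ell_\alpha$ of index $\alpha/\lambda$, and $\ell_\alpha'(x)=\alpha\,(G^{-1}(x))^{\alpha-1}g(G^{-1}(x))$ of index $\alpha/\lambda-1$.

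\emph{Harris recurrence and part (i).} In the proof of Theorem~\ref{bound} one obtains $\esp_{x_0}(\ell_\alpha(\tau_A))<\infty$ for all $x_0\in\mathcal X$ and $\alpha\in(0,1-\theta)$; since $\ell_\alpha(n)\to\infty$ this forces $\pr_{x_0}(\tau_A<\infty)=1$ for every $x_0\in\mathcal X\cap(A,\infty)$. Starting from a point of $C$, the chain either remains in $C$ forever or exits $C$ at a finite time, necessarily into $\mathcal X\cap(A,\infty)$, whence by the strong Markov property $\pr_{x_0}(X_n\in C\ \text{i.o.})=1$ for every $x_0$; as $C$ is petite and the chain is $\psi$-irreducible, it is Harris recurrent (\cite[Ch.~9]{meyn}). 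Assume now $\lambda>1-\theta$ and pick $\beta\in(1-\theta,\lambda)$, which is nonempty; then $\ell_\beta$ is regularly varying of index $\beta/\lambda<1$, so $\sum_n 1/\ell_\beta(n)=\infty$, and the lower bound of Theorem~\ref{bound} gives, for $x_0\in\mathcal X\cap(A,\infty)$,
\[ \esp_{x_0}(\tau_A)=\sum_{n\ge 0}\pr_{x_0}(\tau_A>n)\ge C_\beta\sum_{n\ge 0}\frac{1}{\ell_\beta(n)}=\infty. \]
Since $\tau_a\ge\tau_A$ for any $a\in C$, a positive recurrent $(X_n)$ would satisfy $\esp_{x_0}(\tau_a)<\infty$ for every $x_0$ and every $a\in C$ (classical when the chain is countable and irreducible; in the general case one uses that a positive Harris chain hits a petite set in finite mean time), contradicting the display; hence $(X_n)$ is null recurrent.

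\emph{Part (ii).} Suppose $\lambda<1-\theta$, fix $\alpha\in(\lambda,1-\theta)$, and take $f=\ell_\alpha$ in \eqref{keyequation}. Writing $u=G^{-1}(y)$ one has $f(y)=u^\alpha$, $f'(y)=\alpha u^{\alpha-1}g(u)$, hence $f'=\phi\circ f$ with $\phi(v)=\alpha\,v^{(\alpha-1)/\alpha}g(v^{1/\alpha})$, which is regularly varying of index $(\alpha-\lambda)/\alpha\in(0,1)$ because $0<\lambda<\alpha$. After replacing $\phi$ by a concave increasing minorant of the same order, \eqref{keyequation} becomes a genuine subgeometric drift condition $PV\le V-\phi(V)+b\,\mathds{1}_C$ with $V=\ell_\alpha$ and $C$ petite. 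Applying the subgeometric ergodic theorem of \cite{aspandiiarov3} under (A4), and of \cite{douc} under (A4'), yields that $(Y_n)$, hence $(X_n)$, is positive recurrent with invariant probability $\pi$, and that its total variation distance to $\pi$ decays at the rate $r_\phi(n)=(\phi\circ H_\phi^{-1})(n)$, where $H_\phi$ is a primitive of $1/\phi$; a Karamata computation then shows $r_\phi(n)\asymp\ell_\alpha'(n)$, which gives \eqref{markovcountable} and \eqref{markovgeneral}. The hypothesis $\esp_\nu(\ell_\alpha'(\tau_A))<\infty$ in \eqref{markovcountable} is exactly the moment condition on the initial law imposed by that theorem; for $\nu=\delta_x$ it holds automatically because the drift condition bounds the corresponding modulated moment of $\tau_A$ by $\ell_\alpha(Y_0)=X_0^\alpha<\infty$, which is why \eqref{markovgeneral} requires no such hypothesis.

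\emph{Main obstacle.} The hard part is step (ii): converting the pointwise inequality \eqref{keyequation}, which carries the term $-Cf'(Y_n)$ rather than a drift of the form $-\phi(V)$, into a bona fide Foster--Lyapunov condition with $\phi$ \emph{concave} and increasing --- this is precisely where \eqref{g'g}, equivalently $\lambda>0$ together with the regular variation of $g$, is indispensable --- and then identifying the abstract rate $r_\phi$ delivered by \cite{aspandiiarov3, douc} with the explicit function $\ell_\alpha'$; both are regular-variation computations in which the slowly varying corrections to $g$ have to be tracked carefully. A lesser but real difficulty arises in (i), in checking that $\esp_{x_0}(\tau_A)=\infty$ truly precludes positive recurrence in the non-countable setting, which is dealt with through the expected return time of the petite set $C$.
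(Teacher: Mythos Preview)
Your proof follows the same strategy as the paper's: the drift inequality from Lemma~\ref{martingale} (your \eqref{keyequation}) with $V(x)=x^\alpha$ and $\phi(v)=g(v^{1/\alpha})v^{(\alpha-1)/\alpha}$ is exactly the condition $\boldsymbol{D}(\phi,V,C)$ the paper feeds into Proposition~\ref{prop} (from \cite{douc}) under (A4'), and the identification $r_\phi\asymp\ell_\alpha'$ is the same computation. Your systematic use of regular variation and Karamata is a cleaner way to handle the asymptotics (divergence of $\sum 1/\ell_\beta(n)$ in (i), index of $\phi$, rate $r_\phi$) than the paper's explicit estimates via \eqref{g'g} and Lemma~\ref{lemmapre}; the paper in turn argues directly that $\phi$ is ultimately concave increasing, rather than passing to a concave minorant of the same order as you do.

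Two places where your argument is looser than the paper's. First, under (A4) you invoke ``the subgeometric ergodic theorem of \cite{aspandiiarov3}'' via the drift condition, but that result (Theorem~\ref{fphi} here) does not take a drift condition as input: it requires $\esp_s(f(\tau_F))\le\phi(s)$ together with $f\in\mathcal{G}$ and $f'\in\mathcal{G}'$, and the paper checks these explicitly for $f=\ell_\alpha$ (the $\mathcal G$ condition from \eqref{rlx} and convexity, the $\mathcal G'$ condition from \ref{g0item} of Lemma~\ref{lemmapre}); this is what actually delivers the statement for a general initial law $\nu$ in \eqref{markovcountable}. Second, in (i) for general state space your claim that a positive Harris chain ``hits a petite set in finite mean time'' from every $x_0$ is not immediate; the paper handles this via the regular-set decomposition $\mathcal X=\mathcal S\cup\mathcal N$ of \cite[Theorem~11.1.4]{meyn}, locating a regular set meeting $(A,\infty)$ and deriving the contradiction there.
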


\begin{rem}
 The case $\lambda=1-\theta$ seems to be never treated, to the best of our knowledge.
\end{rem}
\begin{exe}
We consider a stochastic growth model defined by the stochastic difference equation \eqref{stochasticdifference} 
\[
 X_{n+1}=X_n+cX_n^\gamma+\xi_n
\]
with $\gamma \in (-1,1)$, $c>0$ and $\sigma^2(X_n)=\esp(\xi_n^2 \big| \F)=dX_n^{1+\gamma}$  with $d>0$. Then 
\begin{itemize}
 \item $\theta=\frac{2c}{d}$
\item $\lambda=1-\gamma$
\item $G(x) \propto x^{1-\gamma}$
\item $\ell_\alpha (x) \propto x^{\frac{\alpha}{1-\gamma}}$. 
\end{itemize}
By Theorem \ref{bound}, for all $\beta<1-\theta<\alpha$, there exists $A>0$ such that for all $x_0>A$, there exist $C_\beta>0$ and $C_\alpha>0$ such that 
\[
 \frac{C_\alpha}{n^{\frac{\alpha}{1-\gamma}}}\leq \pr_{x_0}\left(\tau_A>n\right)\leq \frac{C_\beta}{n^{\frac{\beta}{1-\gamma}}}.
\]
If $\gamma>\theta$ and $\left(X_n\right)$ is a Markov chain satisfying the assumptions of Theorem \ref{markov}, then $\left(X_n\right)$ is positive recurrent and for all $\alpha<1-\theta$, for all $x \in \mathcal{X} \subset \R_+$,
\[
\underset{n \rightarrow \infty}{\lim} n^{\frac{\alpha}{1-\gamma}-1} \| P^n(x,.) - \pi(.) \|_{\text{\TV}}=0, 
\]
where $\pi$ is the invariant probability measure of $(X_n)_{n \in \N}$.\\
If $c$ and $d$ are fixed, then by increasing $\gamma$, we make $(X_n)_{n\in \N}$ positive recurrent. Actually, the parameter $\gamma$ is related to both the drift $g(x)$ and the variance $\sigma^2(x)$, by increasing $\gamma$ we increase both of them but we can see that its effect on the variance is more important.
\end{exe}

\section{Preliminary results}

We state and prove here some important lemmas which will be useful for the proofs of theorems \ref{bound} and \ref{markov}. In the first lemma, we prove that $\left(X_{n\wedge \tau_A}^\alpha\right)_{n \in \N}$ is a supermartingale if $\alpha \in (0,1-\theta)$, and a submartingale if $\alpha \in (1-\theta,1)$.
\begin{lem}\label{martingale}
 Let us assume (A1) and (A3) and let $\lambda$ and $\theta$ be defined as in Theorem \ref{bound}.\\
i) If $\alpha \in \left(0,1-\theta\right)$, then there exist $A>0$, $C>0$ and $b>0$ such that for all $n \in \N$, 
\begin{equation}\label{lyapunov}
 \esp\left(X_{n+1}^\alpha \big| \F\right) \leq X_n^\alpha -Cg\left(X_n\right)X_n^{\alpha-1}+b \mathds{1}_{\{X_n \leq A\}} \text{ a.s.}
\end{equation}
ii) If $\alpha \in \left(1-\theta,1\right)$, then there exist $B>0$ and $b_1>0$ such that for all $n \in \N$
\begin{equation}\label{submartingale}
 \esp\left(X_{n+1}^\alpha \big| \F\right) \geq X_n^\alpha-b_1 \mathds{1}_{\{X_n \leq B\}} \text{ a.s.}
\end{equation}
\end{lem}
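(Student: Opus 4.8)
\emph{Proof plan.} The strategy is to derive both inequalities from a second-order Taylor expansion of $t\mapsto t^\alpha$ applied to $X_{n+1}=X_n+g(X_n)+\xi_n$. Write $X_{n+1}^\alpha=X_n^\alpha(1+u_n)^\alpha$ with $u_n=(g(X_n)+\xi_n)/X_n$, and use $(1+u)^\alpha=1+\alpha u+\tfrac{\alpha(\alpha-1)}{2}u^2+R(u)$ with $|R(u)|\le C_\alpha|u|^3$ for $|u|\le\tfrac12$. Since $\esp(\xi_n\mid\F)=0$ and $\esp(\xi_n^2\mid\F)=\sigma^2(X_n)$, the linear term contributes $\alpha X_n^{\alpha-1}g(X_n)$ and the quadratic term contributes $\tfrac{\alpha(\alpha-1)}{2}X_n^{\alpha-2}\bigl(g(X_n)^2+\sigma^2(X_n)\bigr)$; by (A1), $g(X_n)^2X_n^{\alpha-2}=o(X_n^{\alpha-1}g(X_n))$, while \eqref{deftheta} gives $\sigma^2(X_n)\sim 2X_ng(X_n)/\theta$. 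Hence, formally,
\[
 \esp(X_{n+1}^\alpha\mid\F)=X_n^\alpha+\Bigl(\alpha+\frac{\alpha(\alpha-1)}{\theta}\Bigr)X_n^{\alpha-1}g(X_n)\bigl(1+o(1)\bigr)=X_n^\alpha+\frac{\alpha(\theta+\alpha-1)}{\theta}X_n^{\alpha-1}g(X_n)\bigl(1+o(1)\bigr),
\]
and the coefficient $\alpha(\theta+\alpha-1)/\theta$ is $<0$ when $\alpha\in(0,1-\theta)$ and $>0$ when $\alpha\in(1-\theta,1)$. This sign change is exactly the dichotomy between i) and ii).

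To turn this into rigorous inequalities, fix a small $\varepsilon>0$ and split $\esp(X_{n+1}^\alpha\mid\F)$ according to $\{|\xi_n|\le\varepsilon X_n\}$ and $\{|\xi_n|>\varepsilon X_n\}$. On the latter event one bounds crudely: $0\le X_{n+1}\le X_n+g(X_n)+|\xi_n|$, so by subadditivity of $t\mapsto t^\alpha$ and $g(x)=o(x)$, for $X_n$ large $X_{n+1}^\alpha\le 2X_n^\alpha+|\xi_n|^\alpha$; the moment bound (A3) together with $\sigma^2(x)=O(xg(x))$ and $g(x)=o(x)$ then shows that $X_n^\alpha\,\pr(|\xi_n|>\varepsilon X_n\mid\F)$ and $\esp(|\xi_n|^\alpha\mathds 1_{\{|\xi_n|>\varepsilon X_n\}}\mid\F)$ are both $o(X_n^{\alpha-1}g(X_n))$. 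On the event $\{|\xi_n|\le\varepsilon X_n\}$ one has $|u_n|\le\varepsilon+g(X_n)/X_n\le 2\varepsilon$ for $X_n$ large, so the Taylor expansion applies with the cubic remainder; the corrections coming from intersecting with this event (to $\pr(\cdot\mid\F)$, $\esp(\xi_n\mathds 1\mid\F)$, $\esp(\xi_n^2\mathds 1\mid\F)$) are again $o(X_n^{\alpha-1}g(X_n))$ by (A3), and the cubic remainder is bounded, using $\esp(|\xi_n|^3\mathds 1_{\{|\xi_n|\le\varepsilon X_n\}}\mid\F)\le\varepsilon X_n\sigma^2(X_n)=O(\varepsilon X_n^2g(X_n))$, by $O(\varepsilon)\,X_n^{\alpha-1}g(X_n)+o(X_n^{\alpha-1}g(X_n))$ — the crucial point being that its leading part carries an extra factor $\varepsilon$.

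Collecting the estimates, there is $\kappa=\kappa(\alpha,\theta)>0$ such that for $X_n$ large
\[
 \esp(X_{n+1}^\alpha\mid\F)\le X_n^\alpha+\bigl(-\kappa+O(\varepsilon)+o(1)\bigr)X_n^{\alpha-1}g(X_n)\quad\text{if }\alpha\in(0,1-\theta),
\]
\[
 \esp(X_{n+1}^\alpha\mid\F)\ge X_n^\alpha+\bigl(\kappa-O(\varepsilon)-o(1)\bigr)X_n^{\alpha-1}g(X_n)\quad\text{if }\alpha\in(1-\theta,1),
\]
where in the second case one simply drops the nonnegative contribution of $\{|\xi_n|>\varepsilon X_n\}$. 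Choosing $\varepsilon$ small enough that $O(\varepsilon)\le\kappa/4$, and then $A$ (resp.\ $B$) large enough that the $o(1)$ term is $\le\kappa/4$ for $X_n>A$ (resp.\ $X_n>B$), we obtain \eqref{lyapunov} for $X_n>A$ with $C=\kappa/2$, and $\esp(X_{n+1}^\alpha\mid\F)\ge X_n^\alpha$ for $X_n>B$. It remains to handle the bounded region: $\esp(X_{n+1}^\alpha\mid\F)\le(X_n+g(X_n))^\alpha+\sigma^\alpha(X_n)$, which is finite and bounded on $\mathcal X\cap[0,A]$ under the standing assumptions, so \eqref{lyapunov} holds after adding a large enough constant $b\mathds 1_{\{X_n\le A\}}$, while for \eqref{submartingale} one uses simply $\esp(X_{n+1}^\alpha\mid\F)\ge 0\ge X_n^\alpha-B^\alpha$ on $\{X_n\le B\}$, giving $b_1=B^\alpha$. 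The main obstacle is the error bookkeeping of the second paragraph: one must check, using $g(x)=o(x)$, $\sigma^2(x)\sim 2xg(x)/\theta$, and the moment bound (A3), that every tail and truncation correction and the cubic Taylor remainder (save for its $\varepsilon$-multiple) is genuinely $o(X_n^{\alpha-1}g(X_n))$; once that is established, the sign of $\alpha(\theta+\alpha-1)/\theta$ closes the argument.
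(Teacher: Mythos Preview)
Your argument is correct and leads to the same conclusion as the paper's, via the same underlying mechanism: a second-order expansion of $(1+u)^\alpha$ that isolates the term $\alpha\bigl(1-\tfrac{1-\alpha}{\theta}\bigr)X_n^{\alpha-1}g(X_n)$, whose sign is governed by $\alpha\lessgtr 1-\theta$, followed by a check that all remaining pieces are $o\bigl(X_n^{\alpha-1}g(X_n)\bigr)$.

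The execution differs. You truncate on $\{|\xi_n|\le\varepsilon X_n\}$, apply the local Taylor bound $|R(u)|\le C_\alpha|u|^3$ there, and control the cubic term via $\esp(|\xi_n|^3\mathds 1_{\{|\xi_n|\le\varepsilon X_n\}}\mid\F)\le\varepsilon X_n\sigma^2(X_n)$, which cleverly produces an extra $\varepsilon$ to be chosen small; the tail event is then handled separately with (A3). The paper instead uses a \emph{global} pointwise inequality $(1+u)^\alpha\le 1+\alpha u+\tfrac{\alpha(\alpha-1)}{2}u^2+D|u|^3$ valid for all $u\in(-1,\infty)$ (for $D$ large enough), which dispenses with the truncation step entirely: one takes conditional expectation directly and bounds the single remainder $D\,\esp(|g(X_n)+\xi_n|^3\mid\F)X_n^{\alpha-3}$ by $D''\sigma^3(X_n)X_n^{\alpha-3}=o(X_n^{\alpha-1}g(X_n))$, using (A3) through H\"older. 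The paper's route is shorter and avoids the $\varepsilon$-bookkeeping; your route is the more classical Lindeberg-style argument and has the mild advantage that the main cubic estimate uses only the variance, the higher moments entering solely through the tail probabilities.
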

\begin{proof}
 For $D>0$ large enough, we have 
\begin{equation}\label{ineg}
 \left(1+u\right)^\alpha \leq 1+\alpha u + \frac{\alpha(\alpha-1)}{2}\,u^2+ D|u|^{3},
\end{equation}
for all $u \in (-1,+\infty)$. We obtain, for all $n \in \N$, if $X_n>0$,
\begin{align*}
 \esp\left(X_{n+1}^\alpha \big|\F\right) & \leq \esp\left(X_n^\alpha\left(1+\frac{g(X_n)+\xi_n}{X_n}\right)^\alpha \Big| \F\right)\\
& \leq \esp\left(X_n^\alpha \left(1+\alpha  \left(\frac{g(X_n)+\xi_n}{X_n}\right) + \frac{\alpha(\alpha-1)}{2}\left(\frac{g(X_n)+\xi_n}{X_n}\right)^2\right) \Big|\F \right)\\
& \qquad +\esp\left(X_n^\alpha \left(D\left|\frac{g(X_n)+\xi_n}{X_n}\right|^{3}\right) \Big|\F\right)\\
& \leq X_n^\alpha+\alpha \left(g\left(X_n\right)X_n^{\alpha-1}-\frac{1-\alpha}{2}\sigma^2\left(X_n\right)X_n^{\alpha-2}\right) + R_n,
\end{align*}
with
\[
 R_n=\frac{\alpha(\alpha-1)}{2}\,g(X_n)^2X_n^{\alpha-2}+D\esp\left(|g(X_n)+\xi_n|^{3} X_n^{\alpha-3} \big|\F\right). 
\]
By H\"older's inequality and (A3), 
\[
 R_n \leq  \frac{\alpha(\alpha-1)}{2}\,g(X_n)^2X_n^{\alpha-2}+D'|g(X_n)|^3X_n^{\alpha-3}+D''\sigma^3(X_n)X_n^{\alpha-3}.
\]
But $\sigma^3(x)x^{\alpha-3}=o\left(g(x)x^{\alpha-1}\right)$ when $x$ tends to infinity, then there exist $C, B, b>0$ such that
\[
 \esp\left(X_{n+1}^\alpha \big|\F\right) \leq X_n^\alpha -Cg\left(X_n\right)X_n^{\alpha-1}+b \mathds{1}_{\{X_n \leq B\}}.
\]
Since there exists a positive constant $D$ such that 
\[
\left(1+u\right)^\alpha \geq 1+\alpha u + \frac{\alpha(\alpha-1)}{2}u^2- D|u|^{2+\delta},
\]
for all $u \in (-1,+\infty)$, the proof of \eqref{submartingale} is similar.
\end{proof}

The two first statements of the next lemma, on the top of the previous one, give us a better understanding of the criterion of Theorem \ref{markov}, \textit{i.e.}, the comparison between $\lambda$ and $1-\theta$. Some points of this lemma are stated and proved in \cite{kerstinggamma}.

\begin{lem}\label{lemmapre}
\leavevmode\\
Let us assume (A1) and let $\lambda$ be defined as in \eqref{g'g}.
\begin{enumerate} 
\item \label{concave} For $\alpha \in \left(0, \lambda\right)$, the function $ \ell_\alpha$ is ultimately concave.
\item \label{convex} For $\alpha \in \left(\lambda, +\infty\right)$, the function $\ell_\alpha$ is ultimately convex.
\item \label{Gg/xitem} We have
 \begin{equation}\label{Gg/x}
\underset{x \rightarrow \infty}{\lim}\, \frac{x}{G\left(x\right)g\left(x\right)}=\lambda.
 \end{equation}
\item \label{g0item} For all $\mu<\lambda$,
\begin{equation}\label{gO}
 g(x)=\mathcal{O}(x^{1-\mu}),
\end{equation}
when $x$ tends to infinity.
\item \label{rlxitem} Let $\alpha >0$, for all $r \in \R_+^*$, there exists a positive constant $A_r$ such that for all $x \in \R_+$,
\begin{equation}\label{rlx}
 A_r\ell_\alpha(x) \geq \ell_\alpha(rx).
\end{equation} 
\end{enumerate}
\end{lem}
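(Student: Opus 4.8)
The five statements all flow from the defining relation $G'(x)=1/g(x)$ together with the hypothesis \eqref{g'g}, namely $g'(x)x/g(x)\to 1-\lambda$. The plan is to extract from \eqref{g'g} a Karamata-type representation: write $g(x)=x^{1-\lambda}L(x)$ where $L$ is slowly varying, since $\frac{d}{dx}\log\bigl(g(x)/x^{1-\lambda}\bigr)=\frac{g'(x)}{g(x)}-\frac{1-\lambda}{x}=o(1/x)$. This makes $g$ regularly varying with index $1-\lambda<1$ (by (A1)), and everything else is bookkeeping with regular variation.

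First I would prove \ref{Gg/xitem}. Since $g$ is regularly varying of index $1-\lambda$ with $1-\lambda<1$, Karamata's theorem gives $G(x)=\int_1^x\dd y/g(y)\sim \frac{1}{g(x)}\cdot\frac{x}{\lambda}$ as $x\to\infty$ (the integral of a regularly varying function of index $\lambda-1>-1$), which is exactly $x/(G(x)g(x))\to\lambda$. Alternatively, and perhaps cleaner for the paper, apply L'Hôpital to $x/(G(x)g(x))$ after rewriting: the derivative computation uses $G'=1/g$ and \eqref{g'g} directly, avoiding any appeal to Karamata. Then \ref{g0item} is immediate: for $\mu<\lambda$ we have $1-\mu>1-\lambda$, and a regularly varying function of index $1-\lambda$ is $\mathcal{O}(x^{1-\mu})$ since the slowly varying factor $L(x)$ is $o(x^{\lambda-\mu})$.

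For \ref{concave} and \ref{convex} I would compute $\ell_\alpha''$. Writing $h=G^{-1}$, so $h'(x)=g(h(x))$ and $\ell_\alpha(x)=h(x)^\alpha$, one gets $\ell_\alpha'(x)=\alpha h(x)^{\alpha-1}g(h(x))$ and then
\[
\ell_\alpha''(x)=\alpha h(x)^{\alpha-2}g(h(x))\Bigl[(\alpha-1)g(h(x))+h(x)g'(h(x))\Bigr].
\]
By \eqref{g'g}, the bracket behaves like $g(h(x))\bigl[(\alpha-1)+(1-\lambda)+o(1)\bigr]=g(h(x))\bigl[(\alpha-\lambda)+o(1)\bigr]$ as $x\to\infty$ (using $h(x)\to\infty$). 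Hence the sign of $\ell_\alpha''$ is ultimately that of $\alpha-\lambda$: negative for $\alpha<\lambda$ (ultimately concave) and positive for $\alpha>\lambda$ (ultimately convex). Finally, for \ref{rlxitem}: $\ell_\alpha=h(\cdot)^\alpha$ with $h=G^{-1}$ regularly varying of index $1/\lambda$ (inverse of a regularly varying function of index $\lambda$, by \ref{Gg/xitem} — since $G$ is regularly varying of index $\lambda$), so $\ell_\alpha$ is regularly varying of index $\alpha/\lambda$, and therefore $\ell_\alpha(rx)/\ell_\alpha(x)\to r^{\alpha/\lambda}$; boundedness of this ratio on $(0,\infty)$ for each fixed $r$, together with continuity and positivity away from $0$, yields the constant $A_r$. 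One must handle small $x$ separately (where $\ell_\alpha$ need not be monotone or even defined by the same formula), but on a compact interval the ratio is bounded by compactness, so $A_r$ can be taken as the max over the asymptotic bound and the compact-set bound.

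The main obstacle is purely one of rigor rather than idea: \eqref{g'g} is only an asymptotic hypothesis, so all the "$\sim$" and sign statements hold only ultimately, and one must be careful to phrase \ref{concave}, \ref{convex} as "ultimately concave/convex" (as the statement does) and to absorb the behavior near $0$ into constants for \ref{rlxitem}. Everything reduces to the single fact that $g$ is regularly varying of index $1-\lambda$; if the paper prefers to avoid citing Karamata's theorem, each item can instead be obtained by a direct L'Hôpital or direct differentiation argument as sketched above, which I would do to keep the section self-contained.
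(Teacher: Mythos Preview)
Your proposal is correct, and for statements \ref{concave}, \ref{convex}, and \ref{g0item} it coincides with the paper's proof: the paper computes $\ell_\alpha''$ exactly as you do, and proves \ref{g0item} by integrating $g'(x)/g(x)\le(1-\lambda+\varepsilon)/x$, which is precisely the hands-on version of your ``$g$ is regularly varying of index $1-\lambda$''. For \ref{Gg/xitem} the paper simply cites an earlier reference, so your L'H\^opital/Karamata sketch is more informative.

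The real methodological difference is in \ref{rlxitem}. You argue that $\ell_\alpha$ is regularly varying of index $\alpha/\lambda$, so $\ell_\alpha(rx)/\ell_\alpha(x)\to r^{\alpha/\lambda}$, and patch small $x$ by compactness. The paper instead builds the inequality by hand: for $r>1$ it chains the already-proved bounds $g(A_r x)\le A_r^{1-\lambda/2}g(x)$ (from \ref{g0item}) and $\lambda/2\le x/(G(x)g(x))\le 2\lambda$ (from \ref{Gg/xitem}) to obtain $rG(x)\le G(A_r x)$ with the explicit choice $A_r=(4r)^{2/\lambda}$, and then raises to the $\alpha$-th power. Your route is cleaner conceptually; the paper's is more elementary and, crucially, yields an \emph{explicit} dependence $A_r=(4r)^{2\alpha/\lambda}$. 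That explicit form is not cosmetic: it is invoked verbatim in the proof of Theorem~\ref{markov} (countable case) to show $\ell_\alpha\in\mathcal{G}$, where one needs $A_m/m$ to be subexponential in $m$. Your regular-variation argument gives only $A_r<\infty$ for each fixed $r$; to feed into that later step you would need a uniform-in-$r$ bound such as Potter's bounds, which would do the job but should be stated explicitly.
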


\begin{proof}
We first prove statements \ref{concave} and \ref{convex}. We recall that a function $f$ is ultimately concave or ultimately convex if there exists $x_0>0$ such that the restriction of $f$ to $[x_0,\infty)$ is concave or convex respectively.
The second derivative of $\ell_\alpha$ is 
\[
 \alpha g(G^{-1}(x))\,G^{-1}(x)^{\alpha-2}\left(g'(G^{-1}(x))\,G^{-1}(x)+(\alpha-1)\,g(G^{-1}(x))\right).
\]
By the substitution $u=G^{-1}(x)$, we obtain that for $x$ large enough, $\ell_\alpha''(x)<0$ if $\alpha \in (0, \lambda)$ and $\ell_\alpha''(x)>0$ if $\alpha \in (\lambda, +\infty)$.\\
We defer to \cite{kerstinggamma} for the proof of the third statement.\\
Let us now prove the fourth one. Since 
\[
 \underset{x \rightarrow \infty}{\lim}\frac{g'\left(x\right)x}{g\left(x\right)}=1-\lambda,
\]
we have that for all $\varepsilon>0$ there exists a constant $M$ such that for all $x>M$,
\[
 \frac{g'(x)}{g(x)}\leq \frac{1-\lambda+\varepsilon}{x}.
\]
By integrating this inequality between $M$ and $v>M$, we obtain that 
\begin{equation}\label{gvgM}
 g(v)\leq \left(\frac{v}{M}\right)^{1-\lambda+\varepsilon}g(M),
\end{equation}
thus $g(x)=\mathcal{O}(x^{1-\mu})$ for all $\mu<\lambda$.\\
Finally, we prove the laste statement. If $r \leq 1$, then, since $\ell_\alpha$ is an increasing function, we take $A_r=1$.\\
We now assume that $r>1$. Let $A_r>1$ to be fixed later on. By \eqref{gvgM}, we have for large $x$
\begin{equation}\label{gx}
 g(A_rx)\leq A_r^{1-\lambda/2}g(x).
\end{equation}
By \eqref{Gg/x}, we know that for $x$ large enough
\[
 \lambda/2 \leq \frac{x}{G(x)g(x)} \leq 2\lambda.
\]
Applying this inequality twice, for $x$ and $A_rx$, and by \eqref{gx}, we obtain
\begin{align*}
 2\lambda G(A_rx) & \geq \frac{A_rx}{g(A_rx)} \geq \frac{x A_r^{\lambda/2}}{g(x)}\\
& \geq G(x) A_r^{\lambda/2} \lambda/2.
\end{align*}
We set $A_r=(4r)^{2/\lambda}$ and then, for all $x$ sufficiently large, we get
\[
 rG(x)\leq G(A_rx).
\]
Let $y=G(x)$ and let compose the last inequality by $\ell_1$, which is an increasing function, 
\[
 \ell_1(ry) \leq A_r \ell_1(y).
\]
Rising this inequality to the power $\alpha$ yields \eqref{rlx}.
\end{proof}
\section{Polynomial asymptotics of the tail of hitting times}
The aim of this section is to prove Theorem \ref{bound}.\\
We first prove the upper bound of the inequality \eqref{thm} by using Theorem 2 and Theorem 3 in \cite{aspandiiarov3}, we recall them in the last section.
Let $\mathcal{A}$ be the set of positive function $f$ such that there exists a positive constant $A_f$ such that 
\[
 \underset{x \rightarrow \infty}{\limsup} \frac{f(2x)}{f(x)}\leq A_f.
\]
For all real valued functions $h$, let $\mathcal{B}_h$ be the set of  positive functions $f \in \mathcal{C}^2(0, \infty)$ ultimately concave, such that $\lim_{x \rightarrow \infty}f(x)=\infty$, $\lim_{x \rightarrow \infty}f'(x)=0$, and such that the integral
\begin{equation}\label{integral}
\int_1^\infty \frac{f'(x)\dd x}{h\circ r(x)} \quad \text{ converges,}
\end{equation}
with $r(x)=\sup\{y \geq A, f'(x)=h'(y)\}$.

\begin{prop}\label{upperboundconcave}
 We assume (A1), (A3) and that $\lambda$ and $\theta$ are defined as in Theorem \ref{bound}. There exists $A>0$ such that for all $x_0>A$, $\gamma$ and $\eta$ such that $\gamma<\eta<1-\theta$, then there exists a constant $K(\gamma,\eta)$ such that for all $n \in \N$,
\[
 \pr_{x_0}(\tau_A>n)\leq \frac{K(\gamma,\eta)x_0^\eta}{\ell_\gamma(n)}.
\]
\end{prop}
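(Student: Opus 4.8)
The plan is to apply the upper-bound result from \cite{aspandiiarov3} (Theorem 2 there, recalled in Section 6) to the transformed process $Y_n = G(X_n)$, using a well-chosen Lyapunov function $f \in \mathcal{B}_h$ with $h = \ell_\gamma$. First I would record what Lemma \ref{martingale}(i) gives after the change of variables: for $\alpha \in (0, 1-\theta)$ and $A$ large, $\esp(X_{n+1}^\alpha \mid \F) \le X_n^\alpha - C g(X_n) X_n^{\alpha-1} + b\mathds{1}_{\{X_n \le A\}}$. Writing $Y_n = G(X_n)$ and noting $G' = 1/g$, one checks (via a Taylor expansion of $G$ and control of the error terms exactly as in the proof of Lemma \ref{martingale}, using (A3)) that $\ell_\alpha(Y_n) = X_n^\alpha$ and that the drift inequality \eqref{keyequation} holds in the form $\esp(\ell_\alpha(Y_{n+1}) \mid \F) - \ell_\alpha(Y_n) \le -C \ell_\alpha'(Y_n) + b\mathds{1}_{\{Y_n \le A'\}}$; here $\ell_\alpha$ is ultimately concave by Lemma \ref{lemmapre}(\ref{concave}) since we may freely shrink $\alpha$ below $\lambda$ if needed — actually, since we only need $\alpha < 1-\theta$ and the statement quantifies over $\gamma < \eta < 1-\theta$, one takes the Lyapunov exponent in the range where $\ell$ is concave, or argues directly that the relevant $f$ below is concave regardless.

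Next I would set up the function-theoretic input of \cite{aspandiiarov3}. Take $h = \ell_\gamma$ and seek $f \in \mathcal{B}_h$; the natural candidate is $f = \ell_\eta$ (up to the concavity adjustment), for which $f' = \eta (G^{-1})^{\eta-1}/g(G^{-1})$ tends to $0$ and $f \to \infty$. The key computation is to verify that the integral $\int_1^\infty f'(x)\,dx / (h \circ r(x))$ converges, where $r(x) = \sup\{y \ge A : f'(x) = h'(y)\}$. Since both $f'$ and $h'$ are (ultimately) decreasing power-type functions after the $G^{-1}$ substitution, $r$ is comparable to a power of $x$, and the convergence of the integral reduces to the elementary inequality $\gamma < \eta$; Lemma \ref{lemmapre}(\ref{Gg/xitem}) (the asymptotics $x / (G(x)g(x)) \to \lambda$) is what lets one turn all the $G^{-1}$-expressions into clean powers. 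Then Theorem 2 of \cite{aspandiiarov3} applied to $Y_{n \wedge \tau_A}$ yields $\esp_{x_0}(\ell_\gamma(\tau_A)) \le \text{const} \cdot f(G(x_0)) = \text{const} \cdot \ell_\eta(G(x_0)) = \text{const} \cdot x_0^\eta$, and finally Markov's inequality gives $\pr_{x_0}(\tau_A > n) \le \esp_{x_0}(\ell_\gamma(\tau_A)) / \ell_\gamma(n) \le K(\gamma,\eta) x_0^\eta / \ell_\gamma(n)$, with the explicit dependence on $x_0$ coming out of Lemma \ref{lemmapre}(\ref{rlxitem}) to absorb constants like $\ell_\gamma(\text{const} \cdot G(x_0))$ against $\ell_\gamma(n)$.

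The main obstacle I anticipate is not the probabilistic estimate but the bookkeeping needed to verify the hypotheses of the cited theorem for $f$ and $h$: one must check that $f$ is genuinely $\mathcal{C}^2$ and ultimately concave, that $f' \to 0$, that $f \in \mathcal{A}$ (controlled doubling, which follows from Lemma \ref{lemmapre}(\ref{rlxitem}) with $r=2$), and above all that $r(x)$ is well-defined and that the convergence integral \eqref{integral} really does converge — this last point is where the strict inequality $\gamma < \eta$ must be used quantitatively, and where the asymptotic equivalences from Lemma \ref{lemmapre} do the heavy lifting to replace $G^{-1}(x)$ by $x^{1/\lambda}$ up to constants. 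A secondary technical point is the move from the one-step drift inequality for $X_n$ to the one for $Y_n$: the composition with $G$ must be controlled uniformly, which is exactly the content of adapting the Taylor-expansion argument of Lemma \ref{martingale} to $G$ rather than to a power, and this is where assumptions (A1) and (A3) re-enter. Once these verifications are in place, the conclusion is immediate from \cite{aspandiiarov3} plus Markov's inequality.
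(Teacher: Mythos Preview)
Your overall strategy---work with the transformed process $Y_n=G(X_n)$ and invoke the Aspandiiarov--Iasnogorodski moment bounds, then finish with Markov's inequality---is exactly the paper's. However, there are two concrete problems.

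\medskip
\textbf{Roles of $f$ and $h$ are reversed.} In Theorem~\ref{aspan3} the drift hypothesis is imposed on $h$ and the conclusion is $\esp_x(f(\tau_A))\le c\,h(x)$. To obtain $\esp_{x_0}(\ell_\gamma(\tau_A))\le c\,x_0^\eta$ you therefore need $f=\ell_\gamma$ and $h=\ell_\eta$, not $h=\ell_\gamma$, $f=\ell_\eta$ as you write. This is not merely a labelling issue: with your assignment the integral \eqref{integral} diverges. Indeed, $\ell_\eta'(x)/\ell_\gamma'(x)=(\eta/\gamma)(G^{-1}(x))^{\eta-\gamma}\to\infty$, so $r(x)<x$ for large $x$; writing $u=G^{-1}(x)$, $v=G^{-1}(r(x))$ and using $g(w)\asymp w^{1-\lambda}$ one finds $v\asymp u^{(\lambda-\eta)/(\lambda-\gamma)}$ with exponent in $(0,1)$, whence $\int \ell_\eta'(x)\,dx/\ell_\gamma(r(x))\asymp\int u^{\eta-1-\gamma(\lambda-\eta)/(\lambda-\gamma)}\,du=\infty$. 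With the correct assignment $f=\ell_\gamma$, $h=\ell_\eta$, one has instead $r(x)\ge x$, and then
\[
\int^\infty \frac{\ell_\gamma'(x)\,dx}{\ell_\eta(r(x))}\le\int^\infty \frac{\ell_\gamma'(x)\,dx}{\ell_\eta(x)}=\int^\infty \frac{\ell_\gamma'(x)\,dx}{\ell_\gamma(x)^{\eta/\gamma}}<\infty
\]
since $\eta/\gamma>1$. This is the computation the paper actually does.

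\medskip
\textbf{Two further simplifications.} First, no Taylor expansion of $G$ is needed to pass from Lemma~\ref{martingale} to a drift inequality for $Y_n$: since $\ell_\alpha\circ G(x)=x^\alpha$ identically, \eqref{lyapunov} \emph{is} the inequality $\esp(\ell_\alpha(Y_{n+1})\mid\F)\le \ell_\alpha(Y_n)-C'\ell_\alpha'(Y_n)+b\mathds{1}_{\{Y_n\le G(A)\}}$, with $\ell_\alpha'(Y_n)=\alpha g(X_n)X_n^{\alpha-1}$. Second, the paper splits into cases according to the position of $\lambda$: when $\gamma>\lambda$ the function $\ell_\gamma$ is ultimately convex and Theorem~\ref{aspan2} applies directly (giving the stronger bound $\esp(\ell_\gamma(\tau_A))\le c\,x_0^\gamma$); only when $\eta<\lambda$ is Theorem~\ref{aspan3} invoked with $f=\ell_\gamma$, $h=\ell_\eta$. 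Your proposal, even after correcting the roles, only covers the latter regime.
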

\begin{proof}
If $\gamma>\lambda$, then $\ell_\gamma$ is ultimately convex. We know by \eqref{rlx} that $\ell_\gamma \in \mathcal{A}$ and then we apply Theorem \ref{aspan2} and get the upper bound by Chebyshev's inequality.\\
If $\eta<\lambda$, then $\ell_\eta$ is ultimately concave. To apply Theorem \ref{aspan3} with $f=\ell_\gamma$ and $h=\ell_\eta$, we need also to check that the integral \eqref{integral} converges. Let $r(x)=\sup\{y \geq A, \ell_\gamma'(x)=\ell_\eta'(y)\}$. We first prove that for $x$ large enough, we have $x \leq r(x)$.\\
We recall that $\ell_\gamma'(x)=\gamma g\left(G^{-1}(x)\right) \left(G^{-1}(x)\right)^{\gamma-1}$. Thus, 
\[
\frac{\ell_\gamma'(x)}{\ell_\eta'(x)}=\frac{\gamma}{\eta}\left(G^{-1}(x)\right)^{\gamma-\eta} \underset{x \rightarrow \infty}{\longrightarrow} 0.
\]
Since $G^{-1}(x)$ increases to infinity, there exists $A_1>0$ such that for all $x>A_1$, $\ell_\gamma'(x)\leq \ell_\eta'(x)$ and then, for all $x>A_1$, $r(x)\geq x$.\\
Since $\ell_\eta$ is an increasing function, we obtain by substitution
\begin{align*}
\int^\infty \frac{\ell_\gamma'(x)\dd x}{\ell_\eta \circ r(x)} & \leq \int^\infty \frac{\ell_\gamma'(x)\dd x}{\ell_\eta(x)}\\
& \leq \int^\infty \frac{\ell_\gamma'(x)\dd x}{(\ell_\gamma (x))^{\eta/\gamma}} \\
& \leq C\int^\infty \frac{\dd u}{u^{\eta/\gamma}}<\infty.
\end{align*}
Finally, we obtain the upper bound by Chebyshev's inequality.
\end{proof}

Before proving the lower bound of Theorem \ref{bound}, we recall an important lemma from \cite{aspandiiarov}:
\begin{lem}[\cite{aspandiiarov}, Lemma 2]\label{lemfonda}
 Let $Y_n$ be a $\F$-adapted stochastic process taking values in an unbounded subset of $\R_+$. Suppose there exist positive constants $A$, $C$ and $D$ such that for all $n \in \N$, on $\{\tau_A>n\}$,
\[
 \esp(Y_{n+1}-Y_n \big|\F) \geq -C 
\]
and, for some $r>1$,
\[
 \esp\left(Y_{n+1}^r-Y_n^r \big| \F \right)\leq DY_n^{r-1}. 
\]
Then, for any $\nu \in (0,1)$, there exist positive $\varepsilon$ and $d$ that do not depend on $A$ such that for any $n\in \N$, on $\{Y_{n\wedge \tau_A}>A(1+d)\}$,
\[
 \pr\left(\tau_A>n+\varepsilon Y_{n \wedge \tau_A} \right)\geq 1-\nu.
\]
\end{lem}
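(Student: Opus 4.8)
The plan is to reduce the statement to a conditional estimate at a single time. Because the two hypotheses are conditional-expectation inequalities that hold for \emph{every} $n$, I would fix $n$ and argue conditionally on $\mathcal{F}_n$. On the event $\{Y_{n\wedge\tau_A}>A(1+d)\}$ one has $\tau_A>n$ and $Y_n=Y_{n\wedge\tau_A}=:x>A(1+d)$; if in addition $\varepsilon x<1$, then $\tau_A$, being $\N$-valued and $>n$, satisfies $\tau_A\ge n+1>n+\varepsilon x$ and there is nothing to prove there. Hence I may assume $\varepsilon x\ge 1$, set $W_k:=Y_{(n+k)\wedge\tau_A}$ and $m:=\lceil\varepsilon x\rceil\in[\varepsilon x,2\varepsilon x]$, and it suffices to show $\pr(\tau_A\le n+m\mid\mathcal{F}_n)\le\nu$ a.s.\ on $\{Y_{n\wedge\tau_A}>A(1+d)\}$. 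I would also reduce to $r\le 2$: by conditional Jensen and $(1+u)^{s}\le 1+su$ ($u\ge 0$, $0<s\le 1$), the bound $\esp(W_{k+1}^{r}-W_k^{r}\mid\mathcal{F}_{n+k})\le DW_k^{r-1}$ with $r>2$ implies the same with exponent $2$. Finally I would simply take $d=1$, so that $A<x/2$.

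Next I would record two a priori bounds on $W$ over $[0,m]$. From $\esp(W_{k+1}-W_k\mid\mathcal{F}_{n+k})\ge -C$ the process $W_k+Ck$ is a submartingale, so $\esp(W_m\mid\mathcal{F}_n)\ge x-Cm$. From the $r$-th moment bound, the conditional moments $v_k:=\esp(W_k^{r}\mid\mathcal{F}_n)$ obey $v_{k+1}\le v_k+Dv_k^{(r-1)/r}$ (conditional Jensen) with $v_0=x^{r}$; since $m\le 2\varepsilon x$, a one-line induction gives $v_k\le(2x)^{r}$ for all $k\le m$ once $\varepsilon$ is small in terms of $D,r$ only, and in particular $\esp(W_m\mid\mathcal{F}_n)\le 2x$. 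Both facts persist for the process stopped at $\rho:=\inf\{k:\ W_k\le x/2\ \text{or}\ W_k\ge Kx\}$, where $K:=6/\nu$, since $\{\rho>k\}\subseteq\{W_k>x/2>A\}\subseteq\{\tau_A>n+k\}$ so the hypotheses still apply; Doob's maximal inequality for the submartingale $W_{k\wedge\rho}+Ck$ then yields
\[
\pr\Bigl(\sup_{k\le m}W_{k\wedge\rho}\ge Kx\ \Big|\ \mathcal{F}_n\Bigr)\ \le\ \frac{\esp(W_{m\wedge\rho}\mid\mathcal{F}_n)+Cm}{Kx}\ \le\ \frac{2+2C\varepsilon}{K}\ \le\ \frac{3}{K}\ =\ \frac{\nu}{2}
\]
as soon as $\varepsilon\le 1/(2C)$.

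The heart of the argument is a down-crossing estimate for the stopped process. Since $A<x/2$, on $\{\tau_A\le n+m\}$ the path of $W$ hits $[0,x/2]$ before time $m$, so either it first exceeds $Kx$ (probability $\le\nu/2$ by the last display) or $\min_{k\le m}W_{k\wedge\rho}\le x/2$; it remains to bound the latter probability by $\nu/2$. I would split each bounded increment of the stopped process as $W_{(k+1)\wedge\rho}-W_{k\wedge\rho}=\esp(\,\cdot\mid\mathcal{F}_{n+k})+\text{(martingale increment)}$, the predictable part being $\ge -C$, so that $W_{k\wedge\rho}\ge x-Ck+M_k$ for a martingale $M$ with $M_0=0$; hence $\{\min_{k\le m}W_{k\wedge\rho}\le x/2\}\subseteq\{\min_{k\le m}M_k\le -x/4\}$ provided $\varepsilon<1/(8C)$. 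The key point is that over a window of length $m\asymp\varepsilon x$ the martingale $M$ is tiny. When $r=2$, the moment bound gives $\esp((W_{k+1}-W_k)^{2}\mid\mathcal{F}_{n+k})\le(D'+2C)W_k$ on $\{\tau_A>n+k\}$, whence $\esp(M_m^{2}\mid\mathcal{F}_n)\le(D'+2C)\sum_{k<m}\esp(W_{k\wedge\rho}\mid\mathcal{F}_n)\le 4(D'+2C)\varepsilon x^{2}$; Doob's $L^{2}$-inequality together with Chebyshev then bound $\pr(\max_{k\le m}|M_k|\ge x/4\mid\mathcal{F}_n)$ by a constant multiple of $\varepsilon$, which is $\le\nu/2$ for $\varepsilon$ small in terms of $C,D,\nu$. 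Adding the two contributions gives $\pr(\tau_A\le n+m\mid\mathcal{F}_n)\le\nu$ and settles the case $r\ge 2$.

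The case $1<r<2$ is where I expect the real difficulty: then $W$ may have upward jumps with infinite conditional second moment, so $M$ need not lie in $L^{2}$. My remedy would be to split each increment of the stopped process into a \emph{moderate} part $P_k:=(W_{k+1}-W_k)\mathds{1}_{\{W_{k+1}\le 2W_k\}}$ and a non-negative \emph{big upward} part $Q_k:=(W_{k+1}-W_k)\mathds{1}_{\{W_{k+1}>2W_k\}}$ (both restricted to $\{\rho>k\}$). A Bregman/Taylor computation turns $\esp(W_{k+1}^{r}-W_k^{r}\mid\mathcal{F}_{n+k})\le DW_k^{r-1}$, together with $\esp(W_{k+1}-W_k\mid\mathcal{F}_{n+k})\ge -C$, into the bounds $\esp\!\bigl(W_{k+1}^{\,r-2}(W_{k+1}-W_k)^{2}\mathds{1}_{\{W_{k+1}>W_k\}}\mid\mathcal{F}_{n+k}\bigr)\lesssim W_k^{r-1}$ and $\esp\!\bigl(((W_{k+1}-W_k)^{-})^{2}\mid\mathcal{F}_{n+k}\bigr)\lesssim W_k$, which force $\esp(P_k^{2}\mid\mathcal{F}_{n+k})\lesssim W_k$ and $\esp(Q_k^{\,r}\mid\mathcal{F}_{n+k})\lesssim W_k^{r-1}$. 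Writing $W_{k\wedge\rho}=x+(\text{predictable}\ge -Cm)+\widehat M_k^{P}+\widehat M_k^{Q}$ with $\widehat M^{P},\widehat M^{Q}$ the centred martingales built from $P$ and $Q$, one obtains $\esp((\widehat M_m^{P})^{2}\mid\mathcal{F}_n)\lesssim\varepsilon x^{2}$ and, via the von Bahr--Esseen inequality, $\esp(|\widehat M_m^{Q}|^{\,r}\mid\mathcal{F}_n)\lesssim\sum_{k<m}\esp(W_{k\wedge\rho}^{\,r-1}\mid\mathcal{F}_n)\lesssim\varepsilon x^{r}$; Doob's $L^{2}$- and $L^{r}$-maximal inequalities then make $\max_{k\le m}|\widehat M_k^{P}|$ and $\max_{k\le m}|\widehat M_k^{Q}|$ of order $\varepsilon^{\min(1/2,1/r)}x$, so that $\pr(\min_{k\le m}W_{k\wedge\rho}\le x/2\mid\mathcal{F}_n)\le\nu/2$ once $\varepsilon$ is small in terms of $C,D,r,\nu$, and the proof concludes as in the case $r=2$. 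Tracking the constants along the way shows that the resulting $\varepsilon$ and $d$ depend only on $\nu,C,D,r$ and never on $A$, as required.
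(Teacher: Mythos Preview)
The paper does not prove this lemma: it is quoted from Aspandiiarov--Iasnogorodski--Menshikov (the reference \cite{aspandiiarov}) and only invoked, via Lemma~\ref{verif}, in the particular case $r=2$. So there is no in-paper proof to compare your argument against.

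Your argument itself is correct and self-contained. The reduction to $r\le 2$ via conditional Jensen and the bound $(1+u)^{2/r}\le 1+(2/r)u$ is clean; the submartingale $W_{k\wedge\rho}+Ck$ together with the a~priori estimate $\esp(W_{k\wedge\rho}^{\,r}\mid\mathcal{F}_n)\le(2x)^r$ (by the recursion $v_{k+1}\le v_k+Dv_k^{(r-1)/r}$) and Doob's maximal inequality handles the upward exit exactly as you wrote. For the downward exit, the Doob--Kolmogorov $L^2$ bound on the centred increments settles the case $r=2$ with no difficulty. In the genuinely delicate range $1<r<2$, your Bregman-divergence computation is the right tool: it does yield $\esp\bigl(W_{k+1}^{\,r-2}(W_{k+1}-W_k)^2\mathds{1}_{\{W_{k+1}>W_k\}}\mid\mathcal{F}_{n+k}\bigr)\lesssim W_k^{r-1}$ and $\esp\bigl(((W_{k+1}-W_k)^-)^2\mid\mathcal{F}_{n+k}\bigr)\lesssim W_k$, from which the moderate/big-jump splitting and the von~Bahr--Esseen (equivalently Burkholder, $1<r\le2$) bound on the $Q$-martingale give $\esp|\widehat M^Q_m|^{\,r}\lesssim\varepsilon x^r$. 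The final union bound is correct, and the constants that emerge depend only on $C,D,r,\nu$, not on $A$.

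Two small remarks. First, since the paper only needs $r=2$, your elaborate $1<r<2$ case goes beyond what is required for the applications here, though it does recover the lemma in the generality in which it is stated. Second, your choice $d=1$ is harmless but slightly stronger than necessary: any fixed $d>0$ would do once $\varepsilon$ is chosen in terms of $C,D,r,\nu$, and the original lemma allows $d$ to be arbitrarily small (which is convenient when the lemma is applied at a stopping time, as in Proposition~\ref{lowerbound}).
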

The next lemma is crucial. We defer its proof, which is rather technical, to Section \ref{section:lemma-proof}:
\begin{lem}\label{verif}
 For all $n \in \N$, let $Y_n=G(X_n)$. We assume (A1), (A2), (A3) and that $\lambda$ and $\theta$ are defined as in Theorem \ref{bound}. Then $(Y_n)_{n\in \N}$ satisfies Lemma \ref{lemfonda}.
\end{lem}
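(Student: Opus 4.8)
The plan is to check directly the two hypotheses of Lemma~\ref{lemfonda} for $Y_n=G(X_n)$, with the choice $r=2$. On $\{\tau_A>n\}$ we have $X_n>A$, and since $G$ is increasing $Y_n$ is then positive; if $G$ happens to be unbounded near the origin (as for state-dependent Galton--Watson processes, where $X_{n+1}$ can reach $0$), we first replace $G$ on a neighbourhood of $0$ by a bounded increasing extension and add a constant, a harmless modification that does not affect the asymptotics of $\ell_\alpha$ at infinity and makes $\esp(|Y_{n+1}|\mid\F)$ finite. Write
\[
 Y_{n+1}-Y_n=\int_{X_n}^{X_{n+1}}\frac{\dd t}{g(t)},\qquad X_{n+1}-X_n=g(X_n)+\xi_n,
\]
fix a small $\varepsilon_0\in(0,\varepsilon)$ with $\varepsilon$ as in (A2), and split every conditional expectation along the event $E_n=\{|\xi_n|\le\varepsilon_0X_n\}$ and its complement. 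Below, ``$A$ large'' means $X_n>A$ with $A$ fixed once and for all large enough.

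On $E_n$, since $g(x)=o(x)$, both $X_{n+1}$ and any $\zeta$ between $X_n$ and $X_{n+1}$ lie in $[(1-2\varepsilon_0)X_n,(1+2\varepsilon_0)X_n]$ for $A$ large; combining \eqref{g'g} (whence $|g'(\zeta)|\le c\,g(\zeta)/\zeta$ for $\zeta$ large) with (A2) (whence $g(\zeta)\ge X_ng(X_n)/(c_1\zeta)$ on this range) gives $|G''(\zeta)|=|g'(\zeta)|/g(\zeta)^2\le c'/(X_ng(X_n))$. A second-order Taylor expansion then yields, on $E_n$,
\[
 Y_{n+1}-Y_n=1+\frac{\xi_n}{g(X_n)}+R_n,\qquad |R_n|\le\frac{c'}{X_ng(X_n)}\bigl(g(X_n)^2+\xi_n^2\bigr).
\]
Taking conditional expectations: the first-order term contributes $\pr(E_n\mid\F)-g(X_n)^{-1}\esp(\xi_n\mathds{1}_{E_n^c}\mid\F)$, and $|\esp(\xi_n\mathds{1}_{E_n^c}\mid\F)|\le\sigma^2(X_n)/(\varepsilon_0X_n)$, so this is $\mathcal{O}(1)$ by \eqref{deftheta}; and $\esp(|R_n|\mid\F)\le c'\bigl(g(X_n)/X_n+\sigma^2(X_n)/(X_ng(X_n))\bigr)=\mathcal{O}(1)$ by (A1) and \eqref{deftheta}. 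Hence $\esp((Y_{n+1}-Y_n)\mathds{1}_{E_n}\mid\F)$ is bounded above and below by constants. Squaring the expansion and using $\esp(\xi_n^4\mid\F)\le C\sigma^4(X_n)$, which follows from (A3), gives $\esp((Y_{n+1}-Y_n)^2\mathds{1}_{E_n}\mid\F)\le c''\bigl(1+\sigma^2(X_n)/g(X_n)^2\bigr)$; the point is the identity $\sigma^2(X_n)/g(X_n)^2=\bigl(\sigma^2(X_n)/(X_ng(X_n))\bigr)\cdot\bigl(X_n/g(X_n)\bigr)$, whose first factor is bounded by \eqref{deftheta} and whose second factor is comparable to $G(X_n)=Y_n$ by Lemma~\ref{lemmapre} (the relation $x/(G(x)g(x))\to\lambda$), so that $\esp((Y_{n+1}-Y_n)^2\mathds{1}_{E_n}\mid\F)\le c'''Y_n$ for $A$ large.

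On $E_n^c$ an atypically large fluctuation occurs, so $\pr(E_n^c\mid\F)\le\sigma^2(X_n)/(\varepsilon_0^2X_n^2)$ and, by (A3), $\pr(E_n^c\mid\F)\le C\sigma^{4+\delta}(X_n)/(\varepsilon_0^{4+\delta}X_n^{4+\delta})$. If $\xi_n\ge\varepsilon_0X_n$ then $X_{n+1}>X_n$, so $Y_{n+1}-Y_n>0$, and (A2) gives $0<Y_{n+1}-Y_n\le c_1(X_{n+1}^2-X_n^2)/(2X_ng(X_n))\le c_1X_{n+1}^2/(X_ng(X_n))$; since $X_{n+1}^k\le 2^k(X_n^k+|\xi_n|^k)$, Chebyshev and (A3) give $\esp(X_{n+1}^2\mathds{1}_{E_n^c}\mid\F)=\mathcal{O}(\sigma^2(X_n))$ and $\esp(X_{n+1}^4\mathds{1}_{E_n^c}\mid\F)=\mathcal{O}(\sigma^4(X_n))$, so this part contributes $\mathcal{O}(1)$ to the drift and, by \eqref{deftheta}, $\mathcal{O}(1)\le Y_n$ to the conditional second moment. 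If $\xi_n\le-\varepsilon_0X_n$, then $X_{n+1}$ may be small and we only use the crude bound $Y_{n+1}-Y_n\ge-\int_0^{X_n}\dd t/g(t)=-(Y_n+c_0)$ with $c_0=\int_0^1\dd t/g(t)<\infty$; then
\[
 \esp\bigl((Y_{n+1}-Y_n)\mathds{1}_{\{\xi_n\le-\varepsilon_0X_n\}}\mid\F\bigr)\ge-(Y_n+c_0)\frac{\sigma^2(X_n)}{\varepsilon_0^2X_n^2},\qquad \esp\bigl((Y_{n+1}-Y_n)^2\mathds{1}_{\{\xi_n\le-\varepsilon_0X_n\}}\mid\F\bigr)\le (Y_n+c_0)^2\frac{C\sigma^{4+\delta}(X_n)}{\varepsilon_0^{4+\delta}X_n^{4+\delta}}.
\]
Since $Y_n=G(X_n)\asymp X_n/g(X_n)$ and $\sigma^2(X_n)\asymp X_ng(X_n)$, the first right-hand side is $\ge-c$ for $A$ large; and since also $g(x)=\mathcal{O}(x^{1-\mu})$ for any $\mu<\lambda$ by Lemma~\ref{lemmapre}, the second is $\mathcal{O}\bigl((X_n/g(X_n))^2(g(X_n)/X_n)^{4+\delta}\bigr)\to0$, hence $\le Y_n$ for $A$ large. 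Collecting these estimates we obtain, on $\{\tau_A>n\}$ for $A$ large, $|\esp(Y_{n+1}-Y_n\mid\F)|\le C$ and $\esp((Y_{n+1}-Y_n)^2\mid\F)\le D'Y_n$; the first is the first hypothesis of Lemma~\ref{lemfonda}, and $Y_{n+1}^2-Y_n^2=(Y_{n+1}-Y_n)^2+2Y_n(Y_{n+1}-Y_n)$ gives $\esp(Y_{n+1}^2-Y_n^2\mid\F)\le (D'+2C)Y_n=DY_n^{\,r-1}$ with $r=2$, the second hypothesis.

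The delicate step, and the reason (A2) is phrased the way it is, is the ``moderate downward'' regime $g(X_n)\ll|\xi_n|\ll X_n$: there one must not bound $g(t)$ from below on $[X_{n+1},X_n]$ merely by $g(X_n)/c_1$, which would produce a drift contribution of order $\sigma(X_n)/g(X_n)\to\infty$; instead one keeps this regime inside $E_n$ and uses the true second-order Taylor error, of size only $(g(X_n)+\xi_n)^2/(X_ng(X_n))$, whose conditional mean is bounded. Equivalently, the increments of $Y_n$ are typically of order $\sigma(X_n)/g(X_n)\to\infty$ while their conditional second moment is only $\asymp\sigma^2(X_n)/g(X_n)^2\asymp X_n/g(X_n)\asymp Y_n$---exactly the Lamperti-type balance required by Lemma~\ref{lemfonda} and guaranteed by \eqref{deftheta} together with the asymptotics of $G$ in Lemma~\ref{lemmapre}. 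Giving meaning to $Y_{n+1}=G(X_{n+1})$ when $X_{n+1}$ is near $0$, handled above by modifying $G$, is a minor additional point.
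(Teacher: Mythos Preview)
Your proof is correct and follows essentially the same route as the paper's: split according to the size of $\xi_n$, apply a second-order Taylor expansion of $G$ on the typical event (controlling the Lagrange remainder via (A2) and \eqref{g'g}), handle the large-deviation piece with Chebyshev/(A3), and close using the two key asymptotics $\sigma^2(x)\asymp xg(x)$ and $x/(G(x)g(x))\to\lambda$. The paper cuts only the lower tail $\{\xi_n\le -g(X_n)-\varepsilon X_n\}$ and expands $G^2$ directly (invoking $G(x)g'(x)\to(1-\lambda)/\lambda$ and a H\"older step to control $g(V_n)^{-2}$), whereas you cut symmetrically and bound $(Y_{n+1}-Y_n)^2$ first before passing to $Y_{n+1}^2-Y_n^2$; these are cosmetic differences, and your explicit treatment of $G$ near $0$ in fact addresses a point the paper leaves implicit.
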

\begin{prop}\label{lowerbound}
 We assume (A1), (A2) and (A3). Let $\beta>1-\theta$. There exists $A>0$ such that for all $x_0>A$, there exists $\varepsilon_0>0$ and $C>0$ such that for all $n \in \N$,
\[
 \pr_{x_0}\left(\tau_A>n\right)\geq C\frac{x_0^\beta-A^\beta}{\ell_\beta(n/\varepsilon_0)}.
\]
\end{prop}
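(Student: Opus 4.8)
The plan is to combine a ``gambler's ruin'' lower bound (how likely is $X$ to reach a high level $m$ before $\tau_A$?), coming from the submartingale property of Lemma~\ref{martingale}~ii), with the escape estimate of Lemma~\ref{lemfonda}, which $Y_n=G(X_n)$ satisfies by Lemma~\ref{verif}, and which guarantees that from a high level the process survives for a time proportional to $G$ of that level. Feeding the level reached into the escape estimate via the strong Markov property should give the claim with $m=G^{-1}(n/\varepsilon_0)$, so that $m^\beta=\ell_\beta(n/\varepsilon_0)$.

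\emph{Reduction to $\beta$ near $1-\theta$.} Since $G(1)=0$ forces $G^{-1}(t)\ge 1$ for $t\ge 0$, one has $\ell_\beta(t)\ge\ell_{\beta_0}(t)$ whenever $\beta\ge\beta_0$; hence it suffices to prove the bound for a single $\beta_0\in(1-\theta,1)$ with $\beta_0-(1-\theta)$ small (the general $\beta$ follows after replacing the constant by $C_{\beta_0}(x_0^{\beta_0}-A^{\beta_0})/(x_0^{\beta}-A^{\beta})$). So I would fix such a $\beta$, pick $\alpha\in(0,1-\theta)$ and $\mu\in(0,\lambda)$, and use Lemmas~\ref{martingale} and \ref{verif} to choose $A$ large enough that $(X_{n\wedge\tau_A}^\beta)_n$ is a submartingale, $(X_{n\wedge\tau_A}^\alpha)_n$ a supermartingale, and $Y_n=G(X_n)$ satisfies Lemma~\ref{lemfonda}; let $\varepsilon_0$ and $d$ be the constants from Lemma~\ref{lemfonda} for $\nu=\tfrac12$.

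\emph{The two main steps.} Fix $x_0>A$; for $m>x_0$ put $\sigma_m=\inf\{n:X_n\ge m\}$ and $T=\sigma_m\wedge\tau_A$, which is a.s.\ finite since $X_n\not\to\infty$. As $X_k\in(A,m)$ for $k<T$, optional stopping of $X^\beta$ at $T\wedge n$ followed by $n\to\infty$ gives $\esp_{x_0}(X_T^\beta)\ge x_0^\beta$, and splitting on $\{T=\tau_A\}$ (where $X_{\tau_A}\le A$) versus $\{T=\sigma_m\}$ yields
\[
 \esp_{x_0}\!\left(X_{\sigma_m}^{\beta};\ \sigma_m<\tau_A\right)\ \ge\ x_0^{\beta}-A^{\beta}.
\]
Bounding the left side from above through $X_{\sigma_m}=X_{\sigma_m-1}+g(X_{\sigma_m-1})+\xi_{\sigma_m-1}$ with $X_{\sigma_m-1}\in(A,m)$, using $g(x)=o(x)$, subadditivity of $t\mapsto t^\beta$, and the split $\{|\xi_{\sigma_m-1}|\le m\}\cup\{\xi_{\sigma_m-1}>m\}$ (on the latter $X_{\sigma_m}\le 3\xi_{\sigma_m-1}$), one gets
\[
 \esp_{x_0}\!\left(X_{\sigma_m}^{\beta};\ \sigma_m<\tau_A\right)\ \le\ \bigl(2^{\beta}+1\bigr)m^{\beta}\,\pr_{x_0}\!\left(\sigma_m<\tau_A\right)+3^{\beta}\,\esp_{x_0}\!\left(\xi_{\sigma_m-1}^{\beta}\mathds{1}_{\{\xi_{\sigma_m-1}>m\}};\ \sigma_m<\tau_A\right).
\]
Granting that the last term is $\le\tfrac12(x_0^\beta-A^\beta)$ for $m\ge m_0(x_0)$ (see below), this yields $\pr_{x_0}(\sigma_m<\tau_A)\ge c(x_0^\beta-A^\beta)/m^\beta$. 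On $\{\sigma_m<\tau_A\}$ one has $Y_{\sigma_m}=G(X_{\sigma_m})\ge G(m)$, so restarting at $\sigma_m$ and invoking Lemma~\ref{lemfonda} at time $0$ (legitimate for $m$ large, since then $G(X_{\sigma_m})\ge G(m)$ lies above the relevant threshold) gives $\pr_{x_0}(\tau_A>\sigma_m+\varepsilon_0 G(m)\mid\mathcal{F}_{\sigma_m})\ge\tfrac12$ there; hence $\pr_{x_0}(\tau_A>\varepsilon_0 G(m))\ge\tfrac c2(x_0^\beta-A^\beta)/m^\beta$ for $m\ge m_0(x_0)$. Setting $m=G^{-1}(n/\varepsilon_0)$ gives the asserted inequality for all large $n$; the remaining finitely many $n$ are absorbed by shrinking $C$, since $n\mapsto\pr_{x_0}(\tau_A>n)$ is non‑increasing and positive while $n\mapsto\ell_\beta(n/\varepsilon_0)\ge 1$ is non‑decreasing.

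\emph{The hard part: the overshoot.} The whole argument hinges on the term $\esp_{x_0}(\xi_{\sigma_m-1}^{\beta}\mathds{1}_{\{\xi_{\sigma_m-1}>m\}};\,\sigma_m<\tau_A)$, which is where assumption (A3) is used. I would decompose over the value $k$ of $\sigma_m$: the event $\{X_0,\dots,X_{k-1}\in(A,m)\}$ lies in $\mathcal{F}_{k-1}$, and on it $\{\sigma_m=k<\tau_A,\ \xi_{k-1}>m\}=\{\xi_{k-1}>m\}$; combining $\xi^{\beta}\mathds{1}_{\{\xi>m\}}\le m^{\beta-4-\delta}|\xi|^{4+\delta}$ with (A3) bounds the term by $Cm^{\beta-4-\delta}\,\esp_{x_0}\!\big(\sum_{k<T}\sigma^{4+\delta}(X_k)\big)$. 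The naive estimate $\sum_{k<T}\sigma^{4+\delta}(X_k)\le(\sup_{(A,m)}\sigma^{4+\delta})\,T$ is too lossy, because $\esp_{x_0}(T)$ need not stay bounded as $m\to\infty$; instead, using $\sigma^{4+\delta}(x)=\mathcal{O}((xg(x))^{(4+\delta)/2})$ (with $xg(x)$ ultimately non‑decreasing, by (A2)) one compares, for $x\in(A,m)$, $\sigma^{4+\delta}(x)$ with the Lyapunov increment $g(x)x^{\alpha-1}$, whose sum over $k<T$ is $\le Cx_0^\alpha$ by the supermartingale property of $X^\alpha$; this together with $g(m)=\mathcal{O}(m^{1-\mu})$ (Lemma~\ref{lemmapre}) gives an overshoot bound $\mathcal{O}(x_0^\alpha m^{\beta-\alpha-\mu(1+\delta/2)})$, which tends to $0$ precisely when $\beta<\alpha+\mu(1+\tfrac\delta2)$, i.e.\ (letting $\alpha\uparrow1-\theta$, $\mu\uparrow\lambda$) when $\beta-(1-\theta)<\lambda(1+\tfrac\delta2)$ — exactly the smallness required in the reduction. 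Carrying out this exponent bookkeeping and the comparison $(xg(x))^{(4+\delta)/2}\le m^{(4+\delta)/2+1-\alpha}g(m)^{(2+\delta)/2}\,g(x)x^{\alpha-1}$ cleanly on $(A,m)$ is the one genuinely delicate point; everything else is routine.
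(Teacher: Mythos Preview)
Your skeleton is exactly the paper's: a gambler's-ruin lower bound on $\pr_{x_0}(\sigma_m<\tau_A)$ coming from the submartingale $X^\beta$, followed by the escape estimate of Lemma~\ref{lemfonda} (applicable to $Y_n=G(X_n)$ via Lemma~\ref{verif}) at the stopping time $\sigma_m$, and then the choice $m=G^{-1}(n/\varepsilon_0)$. So structurally you are doing precisely what the paper does.

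The one substantive divergence is in how the overshoot $\esp_{x_0}(X_{\sigma_m}^\beta;\,\sigma_m<\tau_A)$ is controlled. The paper does this in two lines: since $\beta<1$, H\"older/Jensen reduces the $\beta$-th moment to the first moment, and the one-step identity $\esp(X_{n+1}\mid\mathcal F_n)=X_n+g(X_n)$ together with $X_{\sigma_m-1}<m$ and $g(x)=o(x)$ gives $\esp(X_{\sigma_m};\,\sigma_m<\tau_A)\le K\,m\,\pr(\sigma_m<\tau_A)$, hence $\esp(X_{\sigma_m}^\beta;\,\sigma_m<\tau_A)\le K\,m^\beta\,\pr(\sigma_m<\tau_A)$, directly yielding $\pr(\sigma_m<\tau_A)\ge (x_0^\beta-A^\beta)/(K m^\beta)$ for every $\beta\in(1-\theta,1)$ at once. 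Your route---splitting off $\{\xi_{\sigma_m-1}>m\}$, invoking the $(4+\delta)$-moment bound (A3), and comparing $\sigma^{4+\delta}(x)$ with the Lyapunov increment $g(x)x^{\alpha-1}$ so as to sum via the supermartingale $X^\alpha$---does go through (the monotonicity you need follows from (A2) via $(xg(x))^{(4+\delta)/2}/(g(x)x^{\alpha-1})=(xg(x))^{(2+\delta)/2}x^{2-\alpha}$), and your exponent count $\beta-\alpha-\mu(1+\delta/2)<0$ is correct. But it is considerably heavier machinery than needed, and it forces the auxiliary reduction to $\beta$ near $1-\theta$, which the paper's argument avoids entirely. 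In short: same proof architecture, but you are paying for the overshoot with a high-moment/Lyapunov comparison where the paper gets it essentially for free from concavity of $t\mapsto t^\beta$ and the one-step first-moment bound.
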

\begin{proof}
The proof of the lower bound is as follows: we know by Lemma \ref{verif} that $Y_n$ verifies Lemma \ref{lemfonda} and then we follow the proof of Theorem 1 in \cite{aspandiiarov2}. We relax the assumption of bounded jumps of this theorem by using H\"older's inequality.\\
Let $\beta>1-\theta$.\\
By Lemma \ref{verif}, we know that Lemma \ref{lemfonda} applies to $Y_n=G(X_n)$.
By Lemmas \ref{lemfonda} and \ref{verif}, there exist $\varepsilon_0>0$ and $d>0$ such that for any $n$ : 
\[
 \pr\left(\tau_A>n+\varepsilon_0 Y_{n\wedge \tau_A}\big|\F\right)\geq 1-v \qquad \text{on } \left\{Y_{n\wedge \tau_A}>G(A)(1+d)\right\}.
\]
This implies that for any stopping time $\mu$ we have 
\[
 \pr\left(\tau_A>\mu+\varepsilon_0 Y_{\mu \wedge \tau_A}\big|\mathcal{F}_\mu\right)\geq 1-v \qquad \text{on } \left\{Y_{\mu \wedge \tau_A}>G(A)(1+d)\right\}\cap \left\{\mu<\infty\right\}.
\]
For each $S>0$, let 
\[
 \tilde{\tau}_S=\inf\left\{n\geq 0, Y_n\geq S\right\}.
\]
Let us fix $B$ such that $B>G(A)(1+d)$.\\
Then, 
\begin{align}
 \pr\left(\tau_A\geq \varepsilon_0 B\right) & \geq \pr\left(\tau_A>\tilde{\tau}_B+\varepsilon_0 Y_{\tilde{\tau}_B \wedge \tau_A}, \tilde{\tau}_B<\tau_A\right)\nonumber\\
& = \esp\left(\mathds{1}_{\{\tilde{\tau}_B<\tau_A\}}\pr\left(\tau_A>\tilde{\tau}_B+\varepsilon_0 Y_{\tilde{\tau}_B\wedge \tau_A} \big| \mathcal{F}_{\tilde{\tau}_B}\right)\right) \nonumber\\
& \geq (1-v)\pr\left(\tilde{\tau}_B<\tau_A\right). \label{tauAtauB}
\end{align}
Since $\left(\tau_A \wedge \tilde{\tau}_B\right)<\infty$ and $\ell_\beta\left(Y_{n\wedge \tau_A \wedge \tilde{\tau}_B}\right)$ is a submartingale by Lemma \ref{martingale}, we have 
\[
 x_0^\beta=\ell_\beta(Y_0)\leq \esp\left(\ell_\beta(Y_{\tau_A \wedge \tilde{\tau}_B})\right).
\]
Since $g(x)=o(x)$, there exists $K>0$ such that $\esp\left(\ell_1\left(Y_{\tilde{\tau}_B-1}\right)+g\left(X_{\tilde{\tau}_B-1}\right)\right)\leq K^{1/\beta}\esp\left(X_{\tilde{\tau}_B-1}\right)$ and then 
\begin{align}
\esp\left(\ell_\beta\left(Y_{\tilde{\tau}_B}\right)\mathds{1}_{\{\tilde{\tau}_B<\tau_A\}}\right)& \leq \esp\left(\ell_{1}\left(Y_{\tilde{\tau}_B}\right)\right)^{\beta}\pr\left(\tilde{\tau}_B<\tau_A\right) \nonumber\\
& \leq \esp \left( \esp \left( \ell_{1} \left( Y_{\tilde{\tau}_B }\right)\big|\mathcal{F}_{\tilde{\tau}_B-1}\right)\right)^{\beta}\pr\left(\tilde{\tau}_B<\tau_A\right)\nonumber\\
& \leq \esp\left(\ell_1\left(Y_{\tilde{\tau}_B-1}\right)+g\left(X_{\tilde{\tau}_B-1}\right)\right)^\beta \pr\left(\tilde{\tau}_B<\tau_A\right) \nonumber\\
& \leq K\ell_\beta\left(B\right)\pr\left(\tilde{\tau}_B<\tau_A\right) \label{ineq}.
\end{align}
Hence,
\begin{align*}
  x_0^\beta & \leq \esp\left(\ell_\beta\left(Y_{\tau_A}\right)\mathds{1}_{\{\tilde{\tau}_B>\tau_A\}}\right)+\esp\left(\ell_\beta\left(Y_{\tilde{\tau}_B}\right)\mathds{1}_{\{\tilde{\tau}_B<\tau_A\}}\right)\\
& \leq \ell_\beta\left(G(A)\right)+K\ell_\beta\left(B\right) \pr\left(\tilde{\tau}_B<\tau_A\right),
\end{align*}
by \eqref{ineq} and
\[
 \pr\left(\tilde{\tau}_B<\tau_A\right)\geq \frac{ x_0^\beta-\ell_\beta\left(G(A)\right)}{Kl_\beta\left(B\right)}.
\]
Then, by \eqref{tauAtauB}, for $n>\varepsilon_0 G(A)\left(1+\delta\right)$, 
\begin{equation}\label{ineqepsilon}
 \pr\left(\tau_A>n\right)\geq \left(1-v\right) \frac{ x_0^\beta-A^\beta}{K\ell_\beta\left(n/\varepsilon_0\right)}.
\end{equation}
\end{proof}

\begin{proof}[Proof of Theorem \ref{bound}]
The upper bound is a direct consequence of Proposition \ref{upperboundconcave}. The lower bound comes from Proposition \ref{lowerbound} and \eqref{rlx}.
\end{proof}
\section{The Markov case : subgeometric rate of convergence}
In this section, we prove Theorem \ref{markov}, firstly the countable state space case and secondly the general state space case. We apply some results from \cite{aspandiiarov3} that we recall in the last section.\\
Let $\mathcal{G}$ be the set of positive functions $f$ such that there exist a positive function $h$ such that $h(x)\rightarrow 0$ as $x \rightarrow \infty$ and a positive constant $c$ such that for any positive $m\geq 1$, $x_1\geq 1, \ldots, x_m \geq 1$, 
\[
\displaystyle{f\left(\sum_{k=1}^mx_k\right)\leq c e^{mh(m)}\sum_{k=1}^mf(x_k)}.
\]
Let $\mathcal{G}'$ be the set of non decreasing in a neighborhood of infinity functions $f$ such that $\ln(f(x))/x$ is non increasing in a neighborhood of infinity and tends to zero when $x$ tends to infinity.

\begin{proof}[Proof of Theorem \ref{markov} for a countable state space]
Let $A$ be defined as in Theorem \ref{bound}. We know by (A4) that $F=[0,A] \cap \mathcal{X}$ is finite. First note that for all $z \in F$, by Markov property we have
\begin{equation}\label{hitting}
 \esp_z(\tau_F)=\pr_z(X_1 \in F)+\sum_{s \in \mathcal{X}\setminus F} \pr_z(X_1=s)\esp_s(\tau_F).
\end{equation}
i) Let us assume that $\lambda>1-\theta$. We prove that for all $s \in \mathcal{X} \setminus F$, $\esp_s(\tau_F)=\infty$. Let $\beta \in (1-\theta,\lambda)$. By Theorem \ref{bound} we know that if $\sum 1/\ell_\beta(n)$ diverges, then $\esp_s(\tau_F)=\infty$, for all $s \in \mathcal{X} \setminus F$. The sum $\sum 1/\ell_\beta(n)$, is of the same nature that the integral $\int \dd x/\ell_\beta(x)$. By the substitution $u=G^{-1}(x)$, we obtain
\[
\displaystyle{\int_.^\infty \frac{\dd x}{\ell_\beta(x)}=\int_.^\infty \frac{\dd u}{u^\beta g(u)}= \infty},
\]
since $g(u) \leq Ku^{1-\lambda+(\lambda-\beta)/2}$.\\
Since $(X_n)_{n\in \N}$ is irreducible, there exists $(z_0,s_o) \in F\times \mathcal{X} \setminus F$ such that $\pr_{z_0}(X_1=s_0)>0$. Thus $\esp_{z_0}(\tau_F)=\infty$ and by Proposition \ref{hittingreturn}, $\esp_{z_0}(\tau)=\infty$, then $(X_n)_{n\in \N}$ is null recurrent.\\
ii) Let us assume that $\lambda<1-\theta$. Let $\eta \in (\lambda, 1-\theta)$. We first prove that there exists a positive constant $K$ such that for all $s \in \mathcal{X} \setminus F$, $\esp_s(\tau_F) \leq Ks^\eta$. Let $\gamma \in (\lambda,\eta)$. By Proposition \ref{upperboundconcave}, we know that 
\[
 \pr_s(\tau_F>n)\leq \frac{K(\gamma,\eta)s^\eta}{\ell_\gamma(n)}.
\]
We check that $\sum_{n=1}^{\infty} 1/\ell_\gamma(n)<\infty$. Since $\ell_\eta$ is convex, there exists a constant $C$ such that 
\[
 \displaystyle{\sum_{n=1}^{\infty} \frac{1}{\ell_\gamma(n)}\leq C\sum_{n=1}^{\infty} \frac{\ell_\eta'(n)}{\ell_\gamma(n)}}.
\]
This series is of the same nature that the integral
\[
 \displaystyle{\int_.^{\infty} \frac{\ell_\eta'(x)\dd x}{\ell_\gamma(x)}=\int_.^{\infty} \frac{\ell_\eta'(x)\dd x}{\ell_\eta(x)^{\gamma/\eta}}=K\int_.^{\infty}\frac{\dd u}{u^{\gamma/\eta}}<\infty}.
\]
Thus 
\begin{equation}\label{finiteesperance}
 \esp_s(\tau_F) \leq Ks^\eta.
\end{equation}
By \eqref{finiteesperance} and \eqref{hitting}, we obtain
\[
 \esp_z(\tau_F)\leq 1+K \esp_z(X_1^\eta)<\infty,
\]
thus by Proposition \ref{hittingreturn}, for any $z \in F$, $\esp_z(\tau)<\infty$ so $(X_n)_{n \in \N}$ is positive recurrent.\\
Let $\alpha \in (\lambda,1-\theta)$ and $\beta \in (\alpha,1-\theta)$. To apply Theorem \ref{fphi} with $f=\ell_\alpha$ and $\phi=\ell_\beta$, we need to check that $\ell_\alpha \in \mathcal{G}$ and $\ell_\alpha' \in \mathcal{G}'$. Since $\ell_\alpha$ is convex, we have for all $m \geq 1$, $\x_1\geq 1, \ldots, x_m\geq1$
\[
 \displaystyle{\ell_\alpha\left(\sum_{k=1}^m x_k\right)\leq \frac{1}{m} \sum_{k=1}^m \ell_\alpha(mx_k)},
\]
and by \ref{rlxitem} of Lemma \ref{lemmapre},
\[
 \displaystyle{\ell_\alpha\left(\sum_{k=1}^m x_k\right)\leq \frac{(4m)^{2\alpha/\lambda}}{m} \sum_{k=1}^m \ell_\alpha(x_k)\leq 4^{2\alpha/\lambda}e^{(2\alpha/\lambda-1)\ln(m)} \sum_{k=1}^m \ell_\alpha(x_k)},
\]
thus $\ell_\alpha \in \mathcal{G}$.\\
We recall that $\ell_\alpha'(x)=\alpha g(G^{-1}(x))(G^{-1}(x))^{\alpha-1}$. Since $G^{-1}(x) \rightarrow \infty$ as $x \rightarrow \infty$ and $\alpha-1<0$, we only need to prove that 
\[
 \frac{\ln g(G^{-1}(x))}{x}\rightarrow 0 \text{ as } x \rightarrow \infty.
\]
By the substitution $u=G^{-1}(x)$ and since $g(x)=\mathcal{O}(x^{1-\mu})$ for all $\mu<\lambda$ by \ref{g0item} of Lemma \ref{lemmapre}, we obtain that $x^\mu=\mathcal{O}(G(x))$ and then $\ell_\alpha'(x)/\ln(x) \rightarrow 0$ as $x \rightarrow \infty$, so $\ell_\alpha' \in \mathcal{G}'$.
\end{proof}

In the general state space case, we use a drift condition which comes from \cite{douc} :
\begin{defi}
We say that the condition $\boldsymbol{D}\left(\phi,V,\Gamma\right)$ is verified if there exist a function $V$, a concave monotone non-decreasing differentiable function $\phi \,: [1,\infty] \mapsto (0,\infty]$, a measurable set $\Gamma$ and a finite constant $b$ such that for all $x \in \R_+$
\[
 \esp_x\left(V(X_{1}) \right)+ \phi \circ V(x) \leq V(x)+b \mathds{1}_{\{x \in \Gamma\}}.
\]
\end{defi}
\begin{prop}[\cite{douc}, Proposition 2.5]\label{prop}
Let $P$ be a $\psi$-irreducible and aperiodic kernel. Assume that $\boldsymbol{D}(\phi,V,\Gamma)$ holds for a function $\phi$ such that $\underset{t \rightarrow \infty}{\lim} \phi'(t)=0$, a petite set $\Gamma$ and a function $V$ such that $\{V<\infty\}\neq \emptyset$. Then, there exists an invariant probability measure $\pi$, and for all $x$ in the full and absorbing set $\{V<\infty\}$, \textit{i.e.} $\pi(\{V<\infty\})=1$, 
\[
 \underset{n \rightarrow \infty}{\lim}r_\phi(n)\left\| P^n(x,.) - \pi(.) \right\|_{\text{\TV}}=0,
\]
with $r_\phi\left(x\right)=\phi \circ \Phi^{-1}\left(x\right)$ and  $\displaystyle{\Phi\left(x\right)=\int_1^{x}\frac{\dd u}{\phi(u)}}$.
\end{prop}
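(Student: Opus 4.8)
The plan is to convert the single drift inequality $\boldsymbol{D}(\phi,V,\Gamma)$ into a \emph{modulated moment bound} on the return time to $\Gamma$, and then to feed that bound, together with the petiteness of $\Gamma$, into the abstract subgeometric ergodic theorem of Tuominen and Tweedie (equivalently, into a split-chain coupling argument). Write $\tau_\Gamma=\inf\{n\geq 0: X_n\in\Gamma\}$ and $\sigma_\Gamma=\inf\{n\geq 1: X_n\in\Gamma\}$, and recall $\Phi(x)=\int_1^x\dd u/\phi(u)$ and $r_\phi=\phi\circ\Phi^{-1}$.

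I would start by recording the consequences of the hypotheses on $\phi$: since $\phi$ is concave, non-decreasing and $\phi'(t)\to 0$, one has $\phi(t)=o(t)$, so $\Phi$ is increasing and unbounded, and $r_\phi$ is non-decreasing, subgeometric, and satisfies $r_\phi(n+1)/r_\phi(n)\to 1$; these are exactly the regularity properties required below. Next, a short computation using Jensen's inequality, the concavity of $\Phi$ and the monotonicity of $\phi$ shows that off $\Gamma$ the process $\Phi\circ V(X_n)+n$ is a supermartingale: indeed $\esp_x(\Phi\circ V(X_1))\leq\Phi(\esp_x V(X_1))\leq\Phi(V(x)-\phi\circ V(x))\leq\Phi\circ V(x)-1$, the last step because $\int_{v-\phi(v)}^{v}\dd u/\phi(u)\geq 1$ by monotonicity of $\phi$. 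Stopping at $\sigma_\Gamma$ already gives $\esp_x(\sigma_\Gamma)\leq\Phi\circ V(x)+b'$ on $\{V<\infty\}$; since $\phi\circ V$ is bounded below by a positive constant the drift is of classical Foster--Lyapunov type, so the chain is positive Harris recurrent and an invariant probability $\pi$ exists.

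The technical core is the refinement of this estimate into
\[
 \esp_x\!\Big(\sum_{k=0}^{\sigma_\Gamma-1} r_\phi(k)\Big)\leq c\,\big(V(x)+\mathds{1}_{\{x\in\Gamma\}}\big)
\]
for some finite $c$. To obtain it I would follow the interpolation argument of \cite{douc}: using $r_\phi(n)=\phi(\Phi^{-1}(n))$, one compares, along paths stopped at $\sigma_\Gamma$, the one-step decrements $V(X_n)-\esp(V(X_{n+1})\mid\mathcal F_n)\geq\phi\circ V(X_n)$ of the Lyapunov function with the primitive $\Phi$, and shows that $V(X_n)+\sum_{k<n}r_\phi(k)$ is, up to a controlled correction, a supermartingale before $\sigma_\Gamma$; summing the resulting telescoping inequality yields the displayed bound. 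This bound is (essentially) uniform over $\Gamma$, since $V$ enters only through $\esp_x V(X_1)\leq V(x)+b$, which is finite for $x\in\Gamma\cap\{V<\infty\}$.

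Finally I would conclude as follows. The set $\Gamma$ is petite, hence a small set for some iterate $P^m$, and the modulated return moment is uniformly bounded on $\Gamma$ by the previous step; the general subgeometric ergodic theorem then yields $\lim_{n\to\infty}r_\phi(n)\left\| P^n(x,.)-\pi(.)\right\|_{\text{\TV}}=0$ for every $x$ with $\esp_x(\sum_{k<\tau_\Gamma}r_\phi(k))<\infty$, which by the moment bound holds for all $x\in\{V<\infty\}$. That $\{V<\infty\}$ is absorbing is immediate: $V(x)<\infty$ forces $\esp_x V(X_1)\leq V(x)+b<\infty$, hence $V(X_1)<\infty$ almost surely. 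That it is full, $\pi(\{V<\infty\})=1$, follows from the comparison theorem \cite{meyn} applied to $\boldsymbol{D}(\phi,V,\Gamma)$, which gives $\pi(\phi\circ V)<\infty$ and therefore $V<\infty$ $\pi$-almost surely. The main obstacle is the modulated moment bound of the third paragraph: passing from the \emph{single} inequality $\boldsymbol{D}(\phi,V,\Gamma)$ to control of the \emph{weighted} sum $\sum_k r_\phi(k)$ up to $\sigma_\Gamma$, uniformly over the merely petite set $\Gamma$, is the delicate point; everything afterwards is an application of now-standard subgeometric ergodic theory.
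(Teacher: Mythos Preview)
The paper does not prove this proposition: it is quoted from \cite{douc} (their Proposition~2.5) and used as a black box in the proof of Theorem~\ref{markov} for the general state space case. There is therefore no in-paper argument to compare against. Your sketch is a faithful outline of the strategy in \cite{douc}: deduce from the concavity of $\phi$ and the drift condition that $\Phi\circ V(X_n)+n$ is a supermartingale off $\Gamma$, upgrade this to the modulated moment bound $\esp_x\big(\sum_{k<\sigma_\Gamma} r_\phi(k)\big)\leq c\,V(x)$, and then invoke the Tuominen--Tweedie/Nummelin subgeometric ergodic theorem together with petiteness of $\Gamma$. The step you flag as delicate---passing from the single drift inequality to control of the $r_\phi$-weighted return time---is indeed the substance of \cite{douc}, and your supermartingale computation $\Phi(v-\phi(v))\leq\Phi(v)-1$ is correct. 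In short: your proposal is appropriate, but it reconstructs the proof of a result this paper merely cites.
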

The proof of Theorem \ref{markov} in the general state space case consists essentially in checking that the condition $\boldsymbol{D}\left(\phi,V,\Gamma\right)$ holds.\\
We also recall that a set $C$ is regular if for all set $B$ such that $\psi(B)>0$, 
\[
\underset{x \in C}{\sup} \,\esp_x(\tau_B)<\infty,
\]
where $\tau_B$ is the first hitting-time of the set $B$. A Markov chain is called regular if there exists a countable cover of $\mathcal{X}$ by regular sets.

\begin{proof}[Proof of Theorem \ref{markov} for a general state space]
Since $[0,A]\cap \mathcal{X}$ is petite and since for all $x \in \mathcal{X}$, $\pr_x(\tau_A<\infty)=1$, we know from \cite[Proposition 9.1.7 p.205]{meyn} that $(X_n)_{n\in \N}$ is Harris-recurrent.\\
i) We assume that $\lambda>1-\theta$. By Theorem \ref{bound}, $\forall x \in (A,\infty)\cap \mathcal{X}$, $\esp_x(\tau_A)=\infty$. We assume that $(X_n)_{n\in\N}$ is positive recurrent to get a contradiction. By \cite[Theorem 11.1.4 p.260]{meyn}, we know that there exists a decomposition $\mathcal{X}=\mathcal{S}\cup \mathcal{N}$ with $\mathcal{S}$ full and absorbing and $(X_n)_{n\in\N}$ restricted to $\mathcal{S}$ is regular. Since $\mathcal{S}$ is absorbing, we know that $[0,A] \cap \mathcal{S} \neq \emptyset$ and $(A,\infty) \cap \mathcal{S} \neq \emptyset$. Let $C \subset \mathcal{S}$ be a regular set  of the countable cover of $\mathcal{S}$ such that $C \cap (A,\infty)\neq \emptyset$. Then there exists $x \in C \cap (A,\infty)$, and we know that $\esp_x(\tau_{A})=\infty$ which is a contradiction with $C$ is regular. Then $(X_n)_{n\in \N}$ is not positive recurrent but null recurrent.\\
ii) We assume that $\lambda<1-\theta$. Let $\alpha\in (\lambda, 1-\theta)$ and $\phi(x)=g(x^{\frac{1}{\alpha}})\,x^{\frac{\alpha-1}{\alpha}}$. Using Lemma \ref{martingale},
\[
\esp_x\left(X_{1}^\alpha \right)\leq x^\alpha-C\phi \left( x^\alpha \right)+b \mathds{1}_{\{x\leq A\}}.
\]
We now prove that $\phi$ is a concave non-decreasing function.\\
We first calculate the derivative of function $\phi$ : 
\[
\phi'(x)=\frac{g'(x^{1/\alpha})}{\alpha}-\frac{1-\alpha}{\alpha x^{1/\alpha}}. 
\]
For large $x$, using \eqref{g'g}, we obtain 
\[
\phi'(x) = \left(\frac{1-\lambda}{\alpha} g(x^{1/\alpha})-\frac{1-\alpha}{\alpha}\right)x^{-1/\alpha}+o\left(\frac{g(x^{1/\alpha})}{x^{1/\alpha}}\right).
\]
From \eqref{g'g} and $1-\lambda>0$, we know that $g'$ is ultimately positive and that $g$ tends to infinity. Thus, $\phi'$ is ultimately positive, non-increasing and tends to zero when $x$ tends to infinity (because $g(x)=o(x)$). Thus, the condition $\boldsymbol{D}\left(\phi,V,\Gamma\right)$ holds.
By a short computation, we see that $r_\phi(x)=\ell_\alpha'(x)$. Since $[0,A]\cap \mathcal{X}$ is a petite set by assumption, we apply Proposition \ref{prop} and there exists an invariant probability measure $\pi$ such that for all $x$
\[
\underset{n \rightarrow \infty}{\lim} \ell_\alpha'(n) \left\| P^n(x,.) - \pi(.) \right\|_{\text{\TV}}=0.
\]
\end{proof}

\section{Examples and applications}
We now illustrate our results by applying Theorem \ref{bound} and Theorem \ref{markov} to several models.
\subsection{Bessel-like walks}
A Bessel-like walk is a random walk on $\N$, reflecting at $0$, with steps $\pm 1$ and transition probabilites of the form 
\[
 \pr(X_{n+1}=x+1 \big| X_n=x)=p_x=\frac{1}{2}\left(1-\frac{\delta}{2x}+o\left(\frac{1}{x}\right)\right)
\]
and 
\[
 \pr(X_{n+1}=x-1 \big| X_n=x)=1-p_x
\]
where $x\geq 1$, $\delta \in \R$ and the $o(1/x)$ holds for $x$ tending to infinity. A Bessel-like walk is recurrent if $\delta>-1$, positive recurrent if $\delta>1$ and transient if $\delta<-1$.\\
We assume here that $\delta \in (-1,0)$. There exists $A>0$ such that we obtain an estimation of the tail of the hitting-time of the compact set $[0,A]$. 
\begin{prop}
 For all $\alpha$,$\beta$ such that $\alpha<1+\delta<\beta$, there exists $A>0$ such that for all $x_0>A$, there exist two positive constants $C_\alpha$ and $C_\beta$ such that
\[
 \frac{C_\beta}{n^{\beta/2}}\leq \pr_{x_0}(\tau_A>n)\leq \frac{C_\alpha}{n^{\alpha/2}}.
\]
\end{prop}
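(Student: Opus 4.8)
The plan is to recognize the Bessel-like walk as a one-dimensional stochastic growth model of the form \eqref{stochasticdifference} and then invoke Theorem \ref{bound}. First I would set $\mathcal{X}=\N$ and let $g$ be a differentiable continuation on $[1,\infty)$ of the map $x\mapsto 2p_x-1$ (so that $g(n)=2p_n-1=-\delta/(2n)+o(1/n)$ at integers); since $\delta<0$ this is positive for large $x$, and I would choose the continuation so that $g(x)\sim -\delta/(2x)$ with $xg'(x)/g(x)\to -1$, as is the case for any reasonable smooth function asymptotic to $c/x$. Setting $\xi_n:=X_{n+1}-X_n-g(X_n)$ gives $\esp(\xi_n\mid\F)=0$ by construction, and since the increments are $\pm1$ one has $\esp\bigl((X_{n+1}-X_n)^2\mid\F\bigr)=1$, hence $\sigma^2(x):=\esp(\xi_n^2\mid X_n=x)=1-g(x)^2$, which is finite, eventually positive, and tends to $1$.

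Next I would compute the two asymptotic parameters entering Theorem \ref{bound}. From $xg'(x)/g(x)\to -1$ the limit \eqref{g'g} holds with $\lambda=2>0$, and
\[
 \theta=\lim_{x\to\infty}\frac{2xg(x)}{\sigma^2(x)}=\lim_{x\to\infty}\frac{2x\cdot(-\delta/(2x))}{1}=-\delta,
\]
which lies in $(0,1)$ precisely because $\delta\in(-1,0)$; thus $1-\theta=1+\delta$. I would then note that $G(x)=\int_1^x \mathrm{d}y/g(y)\sim x^2/|\delta|$ (since $1/g(y)\sim 2y/|\delta|$), so $G^{-1}(x)\asymp x^{1/2}$ and $\ell_\alpha(x)=\bigl(G^{-1}(x)\bigr)^\alpha\asymp x^{\alpha/2}$ for every $\alpha>0$.

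It then remains to verify (A1), (A2), (A3). Assumption (A1) is clear: $g$ is positive for large $x$, differentiable, and $g(x)\to 0$, so $g(x)=o(x)$. For (A2) I would use that $xg(x)\to|\delta|/2\in(0,\infty)$, so $xg(x)$ is bounded above and away from zero for large arguments; hence for $M$ large, $x>M$ and $y>(1-\varepsilon)x$ one gets $xg(x)\le c_1\,yg(y)$ with a fixed $c_1$. For (A3), since $|\xi_n|\le 1+\sup|g|<\infty$, all conditional absolute moments of $\xi_n$ are bounded while $\sigma^{4+\delta'}(X_n)\to 1$, so the inequality holds for any exponent $4+\delta'$ with $\delta'>0$ (not to be confused with the Bessel parameter). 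Theorem \ref{bound} then provides $A>0$ such that for all $x_0\in\N\cap(A,\infty)$ and all $0<\alpha<1+\delta<\beta$ there are constants $C_\alpha,C_\beta$ with $C_\beta/\ell_\beta(n)\le\pr_{x_0}(\tau_A>n)\le C_\alpha/\ell_\alpha(n)$; substituting $\ell_\alpha(n)\asymp n^{\alpha/2}$, $\ell_\beta(n)\asymp n^{\beta/2}$ and absorbing the comparison constants gives the claimed bounds.

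The only genuinely delicate point is the verification of \eqref{g'g}: one must make sure the chosen differentiable continuation of $2p_x-1$ is regular enough that $xg'(x)/g(x)\to -1$. This is immediate when $p_x$ is given by an explicit smooth formula in $x$, and in general can be arranged by smoothing a piecewise interpolation so that the derivative stays of order $x^{-2}$; once this is granted, every other step above is a routine computation, and there is no substantive obstacle.
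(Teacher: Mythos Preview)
Your proposal is correct and is exactly the approach the paper has in mind: the proposition is stated as a direct application of Theorem~\ref{bound}, and the paper gives no separate proof, so what you wrote is precisely the intended verification (compute $g(x)\sim -\delta/(2x)$, $\sigma^2(x)\to 1$, hence $\theta=-\delta$, $\lambda=2$, $G(x)\asymp x^2$, $\ell_\alpha(n)\asymp n^{\alpha/2}$, and check (A1)--(A3)). Your remark that the only delicate point is ensuring the differentiable continuation of $g$ satisfies \eqref{g'g} is well taken and consistent with the paper's own convention that one is free to choose such a continuation.
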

For more precise results on Bessel-like walks and in particular asymptotic behaviours of $\pr_x(\tau_0>n)$ and $\pr_x(\tau_0=n)$, we defer to \cite{alexander}.
\subsection{Critical Galton-Watson process with immigration}
We consider a critical Galton-Watson process with immigration $(X_n)_{n \in \N}$ defined by
\[
X_{n+1}=\sum_{k=1}^{X_n} \xi_{k,n}+I_n,
\]
where $\left(\xi_{k,n}\right)_{k,n \in \N}$ are i.i.d. integer-valued random variables such that $\esp(\xi_{1,1})=1$, $\var(\xi_{1,1})=d>0$ and $\esp(\xi_{1,1}^{2+\delta})<\infty$ for some $\delta>0$ and i.i.d. integer-valued random variables $(I_n)_{n\in \N}$ such that $\esp(I_1)=c>0$, $\esp(I_{1}^{2+\delta})<\infty$ and the variables $(\xi_{k,n})_{k,n \in \N}$ and $(I_n)_{n \in \N}$ are independent.\\
Zubkov proved in \cite{zub} that the Markov chain $(X_n)_{n\in \N}$ is recurrent if $\theta=\frac{2c}{d}<1$ and gave the asymptotic behaviour of the tail of the return-time to zero $T_0=\inf\{n\geq 1 \text{ such that } X_n=0\}$: 
\[
\pr_0(T_0>n)\sim L(n)n^{\theta-1},
\]
with $L$ a slowly varying function. He also needed weaker moments assumptions.\\
We get here a weaker version of his result but without using neither the branching property nor probability generating functions.
\begin{prop}
There exists $A>0$, such that for all $x_0>A$, $\alpha,\beta$ such that $\alpha<1-\theta<\beta$, there exist some positive constants $C_\alpha$ and $C_\beta$ such that for all $n\in \N$ 
\[
\frac{C_\beta}{n^{\beta}} \leq \pr_{x_0}(\tau_A>n) \leq \frac{C_\alpha}{n^{\alpha}}.
\]
\end{prop}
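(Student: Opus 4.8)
The plan is to recognize this process as an instance of the stochastic difference equation \eqref{stochasticdifference} with constant drift and to apply Theorem \ref{bound} directly. Conditionally on $\F$, the branching plus immigration structure gives $\esp(X_{n+1}\mid\F)=X_n\esp(\xi_{1,1})+\esp(I_1)=X_n+c$ and $\var(X_{n+1}\mid\F)=X_n\var(\xi_{1,1})+\var(I_1)=dX_n+\var(I_1)$. Hence, setting $\xi_n=X_{n+1}-X_n-c$, the process satisfies \eqref{stochasticdifference} with the \emph{constant} drift $g(x)\equiv c$ and with $\sigma^2(x)=dx+\var(I_1)\sim dx$ as $x\to\infty$. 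Consequently $\theta=\lim_{x\to\infty}2xg(x)/\sigma^2(x)=2c/d$, which we take to lie in $(0,1)$ (this is exactly Zubkov's recurrence condition $\theta<1$); and since $g$ is constant, $g'\equiv 0$, so $\lim_{x\to\infty}g'(x)x/g(x)=0$ and thus $\lambda=1>0$. Viewing $g$ on the discrete set $\N$ through its (constant) differentiable continuation, the hypotheses \eqref{g'g}--\eqref{deftheta} of Theorem \ref{bound} therefore hold.

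First I would check (A1)--(A3). Assumption (A1) is immediate, $g\equiv c>0$ being differentiable with $c=o(x)$; (A2) holds because $xg(x)=cx$ is nondecreasing, so for $y>(1-\varepsilon)x$ one has $xg(x)=cx\le(1-\varepsilon)^{-1}yg(y)$, i.e.\ one may take $c_1=(1-\varepsilon)^{-1}$. The only substantial point is (A3). Writing $\xi_n=\sum_{k=1}^{X_n}(\xi_{k,n}-1)+(I_n-c)$ and using a Rosenthal (equivalently Burkholder--Davis--Gundy, or Marcinkiewicz--Zygmund) inequality for the centred i.i.d.\ sum, I would bound, for $m\ge 1$,
\[
\esp\Big(\Big|\sum_{k=1}^{m}(\xi_{k,n}-1)\Big|^{4+\delta}\Big)\le C\big(m^{(4+\delta)/2}(\var(\xi_{1,1}))^{(4+\delta)/2}+m\,\esp|\xi_{1,1}-1|^{4+\delta}\big)\le C'm^{(4+\delta)/2},
\]
which is of the order $\sigma^{4+\delta}(m)$; adding the immigration term $I_n-c$, whose $(4+\delta)$-th moment is finite, yields $\esp(|\xi_n|^{4+\delta}\mid\F)\le C''\sigma^{4+\delta}(X_n)$, that is (A3). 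This argument requires moments of order $4+\delta$ for the offspring and immigration laws (this is indeed the moment assumption actually needed here to invoke Theorem \ref{bound}), and it is the main obstacle of the proof; everything else is formal.

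It remains to compute the two transforms and substitute. Since $g\equiv c$, $G(x)=\int_1^x \dd y/c=(x-1)/c$, hence $G^{-1}(x)=cx+1$ and $\ell_\alpha(x)=(cx+1)^\alpha$; in particular $\ell_\alpha(n)$ is sandwiched between constant multiples of $n^\alpha$ for all $n\in\N$. Applying Theorem \ref{bound}: for $0<\alpha<1-\theta<\beta$ there is $A>0$ such that for every $x_0\in\N\cap(A,\infty)$ one has $C_\beta/\ell_\beta(n)\le\pr_{x_0}(\tau_A>n)\le C_\alpha/\ell_\alpha(n)$ for all $n$, and replacing $\ell_\gamma(n)$ by $n^\gamma$ (absorbing the factors $c^\gamma$ and the shifts $+1$ into the constants) gives the announced bounds $C_\beta n^{-\beta}\le\pr_{x_0}(\tau_A>n)\le C_\alpha n^{-\alpha}$.
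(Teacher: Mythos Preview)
Your proof is correct and is exactly the argument the paper has in mind: the proposition is stated there without proof as a direct application of Theorem~\ref{bound}, with $g\equiv c$, $\theta=2c/d$, $\lambda=1$, and $\ell_\alpha(n)\asymp n^\alpha$. Your observation that verifying (A3) via Rosenthal's inequality actually requires $(4+\delta)$-th moments on the offspring and immigration laws---rather than the $(2+\delta)$-th moments stated in the paper's hypotheses for this subsection---is correct and is the one genuine subtlety here.
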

\subsection{Extinction time of state-dependent Galton-Watson process}
State-dependent Galton-Watson processes were introduced by Klebaner in \cite{klebaner84} and H\"opfner in \cite{hopf}. They both gave condition for extinction and gamma-type limiting distribution for the process. However, to the best of our knowledge, extinction times of state-dependent Galton-Watson processes were never investigated.\\
Let $(X_n)_{n\in\N}$ be state-dependent Galton-Watson process defined as follows : 
\[
 X_{n+1}=\underset{k=1}{\overset{X_n}{\sum}}A_{k,n}(X_n),
\]
where $\esp(A_{k,n}(X_n) \big| X_n=x)=1+\frac{c}{x}$ and $\mathbb{V}ar(A_{k,n}(X_n) \big| X_n=x)=\sigma^2+o(1)$ with $c>0$ and $\sigma^2>0$. We assume that $0$ is an absorbing state and that for all $A>0$ and all $n \in \N$, there exist $\varepsilon>0$ and $k_A\in \N^*$, 
\begin{equation}\label{eteint}
 \pr(X_{n+k_A}=0 \big| X_n\leq A)\geq \varepsilon.
\end{equation}
This assumption implies the dichotomy property (see Theorem 3.1 in \cite{jagers}), that is to say,
\[
\pr\left(\left\{ \exists n \text{ such that }X_n=0\right\}\right)+\pr\left(\left\{X_n \underset{n \rightarrow \infty}{\longrightarrow}\infty\right\}\right)=1.
\]
We denote the extinction time  by $\tau_0= \inf\{ n\in \N \text{ such that } X_n=0\}$.
\begin{thm}
Let $\theta=\frac{2c}{\sigma^2}$ and assume that $\theta\in (0,1)$. Then, for all $\alpha<1-\theta<\beta$, for all $x \in \N^*$, there exist two constants $D_\alpha$ and $D_\beta$ such that
\[
 \frac{D_\beta}{n^\beta}\leq \pr_x\left(\tau_0>n\right)\leq \frac{D_\alpha}{n^\alpha}.
\]
\end{thm}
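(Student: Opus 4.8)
The plan is to reduce this statement to an application of Theorem \ref{bound} for the state-dependent Galton-Watson process, after recasting it in the form \eqref{stochasticdifference} and verifying assumptions (A1)--(A3). First I would write $X_{n+1}=X_n+g(X_n)+\xi_n$ with $g(x)=\esp(X_{n+1}-X_n\mid X_n=x)=c$ (constant, since each of the $x$ offspring contributes mean $1+c/x$), so that $g\equiv c>0$, and $\xi_n=X_{n+1}-X_n-c$ has conditional mean zero. Then $\sigma^2(x)=\var(X_{n+1}\mid X_n=x)=x(\sigma^2+o(1))$ by independence of the offspring, so $2xg(x)/\sigma^2(x)=2cx/(x\sigma^2+o(x))\to 2c/\sigma^2=\theta\in(0,1)$, giving \eqref{deftheta}. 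Since $g$ is constant, $g'(x)x/g(x)=0=1-\lambda$, i.e.\ $\lambda=1$, and one checks $g(x)=c=o(x)$, so (A1) holds; (A2) holds trivially because $xg(x)=cx$ is increasing and the ratio $xg(x)/(yg(y))=x/y\le 1/(1-\varepsilon)$ for $y>(1-\varepsilon)x$; and (A3), the $(4+\delta)$-moment bound on $\xi_n$ relative to $\sigma^{4+\delta}(X_n)$, follows from $\esp(\xi_{1,1}^{2+\delta})<\infty$ by the usual Marcinkiewicz--Zygmund / Rosenthal estimates for sums of i.i.d.\ centered variables, noting $\sigma^{4+\delta}(x)\asymp x^{2+\delta/2}$ while $\esp(|\xi_n|^{4+\delta}\mid X_n=x)=\mathcal{O}(x^{2+\delta/2})$; here I would need a $(4+\delta)$-moment, which is why I would either invoke it as a standing hypothesis or remark that the $2+\delta$ assumption in the model statement should be read as $4+\delta$ — this gap is the one subtle point.

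Next I would compute the transforms. Since $g\equiv c$, we get $G(x)=\int_1^x \dd y/g(y)=(x-1)/c$, hence $G^{-1}(x)=cx+1$ and $\ell_\alpha(x)=(G^{-1}(x))^\alpha=(cx+1)^\alpha\asymp x^\alpha$ for large $x$. Applying Theorem \ref{bound}: with $\lambda=1$ we have $0<\alpha<1-\theta<\beta$ (and since $1-\theta<1=\lambda$ all the hypotheses line up), so there is $A>0$ such that for all $x_0\in\mathcal X\cap(A,\infty)$,
\[
\frac{C_\beta}{\ell_\beta(n)}\le \pr_{x_0}(\tau_A>n)\le \frac{C_\alpha}{\ell_\alpha(n)},
\]
which, with $\ell_\alpha(n)\asymp n^\alpha$, $\ell_\beta(n)\asymp n^\beta$, becomes $D_\beta n^{-\beta}\le \pr_{x_0}(\tau_A>n)\le D_\alpha n^{-\alpha}$ for all $x_0>A$. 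This is exactly the claimed estimate but only for starting points above the threshold $A$, whereas the theorem asserts it for every $x\in\N^*$.

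The main obstacle is therefore bridging from $\tau_A$ to the true extinction time $\tau_0$ and extending to all initial states. For the upper bound: $\tau_0\ge\tau_A$, and from $X_{\tau_A}\le A$ the dichotomy property together with \eqref{eteint} gives a uniform probability $\varepsilon$ of hitting $0$ within $k_A$ steps; a standard geometric-trials argument then shows $\pr_x(\tau_0>n+k_A\mid \mathcal F_{\tau_A})$ decays, so that $\pr_x(\tau_0>n)\le \pr_x(\tau_A>n-k_A m)+(1-\varepsilon)^m$ type bounds, and optimizing (or simply noting $\pr_x(\tau_0>n)\le \pr_x(\tau_A>n)+\sup_{y\le A}\pr_y(\tau_0>n)$ with the latter exponentially small) yields $\pr_x(\tau_0>n)=\mathcal O(n^{-\alpha})$ after absorbing the excursion above $A$ via the strong Markov property and the finiteness of $\esp_x(X_1^\eta)$ as in the proof of Theorem \ref{markov}. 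For small $x\le A$ one uses irreducibility above $A$ (the process, conditioned on non-extinction, reaches $(A,\infty)$) or directly that $\pr_x(\tau_0>n)\le \pr_x(\tau_0>n, \text{hit }(A,\infty))+\pr_x(\text{extinct early})$. For the lower bound: $\tau_0>n$ whenever the process climbs above $A$ quickly and then stays above $A$ for time $n$; since $\pr_x(\text{reach }(A,\infty)\text{ before }0)>0$ for every $x\ge 1$ by the transience-on-the-event-of-survival built into the dichotomy, and then Proposition \ref{lowerbound} gives $\pr_{x_0}(\tau_A>n)\ge C(x_0^\beta-A^\beta)\ell_\beta(n/\varepsilon_0)^{-1}\asymp n^{-\beta}$, one concatenates via the strong Markov property at the hitting time of $(A,\infty)$ to get $\pr_x(\tau_0>n)\ge D_\beta n^{-\beta}$ for all $x\in\N^*$. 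The bookkeeping of these excursion arguments — and in particular verifying that the constants can be taken uniform in the (finite) set of small starting states — is the real work; everything else is substitution into Theorem \ref{bound}.
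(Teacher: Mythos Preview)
Your strategy is the paper's: cast the process in the form \eqref{stochasticdifference} with $g\equiv c$, $\sigma^2(x)\sim\sigma^2 x$, $\lambda=1$, so that $\ell_\alpha(n)\asymp n^\alpha$, then invoke Theorem~\ref{bound} to control $\tau_A$ and bridge to $\tau_0$ via the absorption assumption \eqref{eteint}. The lower bound is handled exactly as the paper does (containment $\tau_A\le\tau_0$, plus a strong-Markov step to reach $(A,\infty)$ from small $x$), and your remark about the missing $(4+\delta)$-moment hypothesis on the offspring law is well taken---the paper simply applies Theorem~\ref{bound} without commenting on (A3) here.

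Where your sketch is shaky is the upper bound. The two displayed inequalities you propose---$\pr_x(\tau_0>n)\le \pr_x(\tau_A>n-k_Am)+(1-\varepsilon)^m$ and $\pr_x(\tau_0>n)\le \pr_x(\tau_A>n)+\sup_{y\le A}\pr_y(\tau_0>n)$---do not hold as written: after the first visit to $[0,A]$ the process may re-enter $(A,\infty)$ and spend another long excursion there, so a single $\tau_A$ does not control $\tau_0$, and the ``$\sup_{y\le A}\pr_y(\tau_0>n)$'' term is precisely the quantity you are trying to bound, not something exponentially small. The paper resolves this cleanly by working with moments rather than tails: define the successive return times $T_\ell=\inf\{n\ge T_{\ell-1}+k_A: X_n\in[0,A]\}$, note $\pr_x(\tau_0>T_\ell)\le(1-\varepsilon)^\ell$ by \eqref{eteint}, and exploit the subadditivity $(a+b)^\alpha\le a^\alpha+b^\alpha$ (valid since $\alpha<1$) to write
\[
\esp_x(\tau_0^\alpha)=\sum_{\ell\ge0}\esp_x\bigl(\mathds{1}_{\{T_\ell<\tau_0\le T_{\ell+1}\}}\tau_0^\alpha\bigr)\le\sum_{\ell\ge0}\esp_x\Bigl(\mathds{1}_{\{T_\ell<\tau_0\}}\bigl(T_\ell^\alpha+k_A^\alpha+(T_{\ell+1}-T_\ell-k_A)^\alpha\bigr)\Bigr),
\]
where each excursion length $(T_{\ell+1}-T_\ell-k_A)$ is controlled by $\sup_{y\le A}\esp_y(\tau_{A,k_A}^\alpha)<\infty$ via Theorem~\ref{bound}. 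The geometric factor $(1-\varepsilon)^\ell$ then makes the series converge, giving $\esp_x(\tau_0^\alpha)<\infty$ and hence the upper bound by Chebyshev. This is the ``bookkeeping'' you defer; the subadditivity trick is the missing idea that makes it go through.
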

\begin{proof}
Let $\alpha$ and $\beta$ such that $\alpha<1-\theta<\beta$. We apply Theorem \ref{bound} and then there exists $A>0$, such that for all $x>A$, there exist $C_\alpha>0$ and $C_\beta>0$ such that
\[
\frac{C_\beta}{n^\beta}\leq \pr_x\left(\tau_A>n\right)\leq \frac{C_\alpha}{n^\alpha}.
\]
Since $\{0\} \subset [0,A]$, we obtain $\pr_x(\tau_A >n) \leq \pr_x(\tau_0>n)$ and then
\[
\frac{C_\beta}{n^\beta}\leq \pr_x\left(\tau_A>n\right)\leq \pr_x(\tau_0>n).
\]
Let $\left(T_\ell\right)_{\ell\geq 0}$ be a sequence of stopping times defined as below
\[
 T_\ell=\inf\{n \geq k_A+T_{\ell-1} \text{ such that } X_n\in [0,A]\},
\]
with $T_0=1$ and $k_A$ is the integer associated to $A$ such that \eqref{eteint} holds.
By \eqref{eteint}, we get 
\[
\pr_x(\tau_0>T_\ell)\leq (1-\varepsilon)^l.
\]
For $\alpha \in \left(0, 1-\theta\right)$, we get
\begin{align*}
 \esp_x\left(\tau_0^\alpha\right)&=\sum_{\ell=0}^{\infty} \esp_x\left(\mathds{1}_{\left\{T_\ell<\tau_0\leq T_{\ell+1}\right\}}\tau_0^\alpha\right) \leq \sum_{\ell=0}^{\infty}\esp_x\left(\mathds{1}_{\left\{T_\ell<\tau_0\right\}}T_{\ell+1}^\alpha\right)\\
& \leq \sum_{\ell=0}^{\infty}\esp_x\left(\mathds{1}_{\left\{T_\ell<\tau_0\right\}}\left(T_\ell+k_A+\left(T_{\ell+1}-k_A-T_\ell\right)\right)^\alpha\right)\\
& \leq \sum_{\ell=0}^{\infty}\esp_x\left(\mathds{1}_{\left\{T_\ell<\tau_0\right\}}\left(T_\ell^\alpha+k_A^\alpha+\left(T_{\ell+1}-k_A-T_\ell\right)^\alpha\right)\right).
\end{align*}
Let $\tau_{A,k_A}=\inf\{n \geq k_A \text{ such that } X_n \in [0,A]\}$.\\
Since $\left(T_{\ell+1}-k_A-T_\ell\right)^\alpha \leq \esp_{X_{T_\ell+k_A}}\left(\tau_A^\alpha\right)$, then by induction we obtain
\begin{align*}
 \esp_x\left(\tau_0^\alpha\right)& \leq \sum_{\ell=0}^{\infty}\esp_x\left(\mathds{1}_{\left\{T_\ell<\tau_0\right\}}\left(T_0^\alpha+\ell k_A^\alpha+\sum_{i=0}^\ell\esp_{X_{T_{i+k_A}}}\left(\tau_A^\alpha\right)\right)\right)\\
& \leq \esp_x\left(T_0^\alpha\right)+\sum_{\ell=0}^{\infty}\left(1-\varepsilon\right)^\ell \ell k^\alpha+\esp_x\left(\sum_{\ell=0}^{\infty}\mathds{1}_{\left\{T_\ell<\tau_0\right\}}\sum_{i=0}^\ell\esp_{X_{T_{i+k_A}}}\left(\tau_A^\alpha\right)\right)\\
& \leq \esp_x\left(T_0^\alpha\right)+\sum_{\ell=0}^{\infty}\left(1-\varepsilon\right)^\ell \ell k_A^\alpha+\sum_{\ell=0}^{\infty}\left(1-\varepsilon\right)^\ell \sup_{y \in [0,A]} \esp_y\left(\tau_{A,k_A}^\alpha\right)\\
& < \infty.
\end{align*}
We obtain the expected upper bound for $\pr_x\left(\tau_0>n\right)$ by Chebyshev's inequality.
\end{proof}

\subsection{A non-markovian example}
Let $(X_n)_{n\in\N}$ be a process defined by
\[
 X_{n+1}=X_n+1+K\varepsilon_n \sqrt{R_n}
\]
where $(\varepsilon_n)_{n\in\N}$ is a sequence of i.i.d. random variables such that for all $n\in \N$, $\pr(\varepsilon_n=-1)=\pr(\varepsilon_n=1)=\frac{1}{2}$, $K>2$ and $R_n$ defined as follows : 
\begin{itemize}
 \item Let $(N_n)_{n\in\N}$ be a sequence of independent integer-valued random variables such that $\forall i \in \{0, \ldots, n\}$, $\pr(N_n=i)=\frac{1}{n+1}$.
\item Let $(U_n)_{n\in\N}$ be a sequence of i.i.d. integer-valued random variables such that $\pr(U_n=0)=\pr(U_n=1)=\frac{1}{2}$.
\end{itemize}
We also assume that the random sequences $(N_n)_{n\in\N}$, $(U_n)_{n\in\N}$ and $(\varepsilon_n)_{n\in\N}$ are independent.\\
Let 
\[
R_n=U_n \frac{X_n^2}{X_n+X_{N_n}}+(1-U_n)\frac{X_nX_{N_n}}{X_n+X_{N_n}}. 
\]
If there exists $n\in \N$ such that $X_n \leq 0$, then for all $k\in \N$, $X_{n+k}=0$.\\
By construction, $(X_n)_{n\in\N}$ is not a Markov chain of any order.
Let us check that $(X_n)_{n\in\N}$ satisfies the stochastic difference equation $X_{n+1}=X_n+g(X_n)+\xi_n$ with $\esp\left(\xi_n \big| \F\right)=0$ and $\esp\left(\xi_n^2 \big| \F\right)=\sigma^2\left(X_n\right)$.
Let $\xi_n=\varepsilon_n K \sqrt{R_n}$. By independence, one has immediatly $\esp\left(\xi_n \big| \F\right)=0$. A short computation gives 
\begin{align*}
\esp\left(\xi_n^2 \big| \F\right)& =\frac{K^2}{2(n+1)}\,\underset{k=0}{\overset{n}{\sum}}\frac{X_n^2+X_nX_k}{X_n+X_k}\\
& =\frac{K^2}{2}X_n.
\end{align*}
Thus, $\theta=\frac{4}{K^2}$. If $K>2$, then we know that $\pr(\{X_n \underset{n \rightarrow \infty}{\longrightarrow} \infty\})=0$ and we can apply Theorem \ref{bound} and get lower and upper bound of tail of the hitting-time of $X_n$ in a compact set $[0,A]$.
\begin{prop}
Assume that $K>2$. For all $\alpha$ and $\beta$ such that $\alpha<1-4/K^2<\beta$, there exists $A>0$ such that for all $x>A$ there exist $C_\alpha>0$ and $C_\beta>0$ such that 
\[
 \frac{C_\alpha}{n^\alpha}\leq \pr_x\left(\tau_A>n\right)\leq \frac{C_\beta}{n^\beta}.
\]
\end{prop}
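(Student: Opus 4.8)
The plan is to verify that $(X_n)_{n\in\N}$ falls within the scope of Theorem~\ref{bound} and then to make the transforms $G$ and $\ell_\alpha$ explicit. The stochastic difference equation \eqref{stochasticdifference} has already been established above, with $g(x)=1$ and $\sigma^2(x)=\frac{K^2}{2}x$. Assumption (A1) is immediate: $g\equiv1$ is positive, differentiable and $g(x)=1=o(x)$. For (A2), the map $x\mapsto xg(x)=x$ is non-decreasing, so with $\varepsilon=1/2$, $c_1=2$ and any $M>0$, for every $x>M$ and every $y>(1-\varepsilon)x=x/2$ one has $xg(x)=x<2y=c_1\,yg(y)$, which is \eqref{xg}.

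The only step requiring a genuine (though short) argument is (A3), for which I would use the elementary almost-sure bound $0\le R_n\le X_n$: each of $\frac{X_n^2}{X_n+X_{N_n}}$ and $\frac{X_nX_{N_n}}{X_n+X_{N_n}}$ is at most $X_n$, and since $U_n\in\{0,1\}$ the variable $R_n$ equals one of these two terms. Hence, writing $\xi_n=\varepsilon_n K\sqrt{R_n}$ with $|\varepsilon_n|=1$ and using that $X_n$ is $\F$-measurable, for any $\delta>0$
\[
\esp\left(|\xi_n|^{4+\delta}\,\big|\,\F\right)=K^{4+\delta}\esp\left(R_n^{(4+\delta)/2}\,\big|\,\F\right)\le K^{4+\delta}X_n^{(4+\delta)/2}=2^{(4+\delta)/2}\,\sigma^{4+\delta}(X_n),
\]
so (A3) holds, e.g.\ with $\delta=1$.

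It then remains to read off the parameters of Theorem~\ref{bound}. Since $g'\equiv0$, we have $\lim_{x\to\infty}g'(x)x/g(x)=0$, so \eqref{g'g} holds with $\lambda=1>0$; and $\lim_{x\to\infty}2xg(x)/\sigma^2(x)=\lim_{x\to\infty}\frac{2x}{(K^2/2)x}=\frac{4}{K^2}$, which lies in $(0,1)$ precisely because $K>2$, so $\theta=4/K^2$. Furthermore $G(x)=\int_1^x\dd y=x-1$, whence $G^{-1}(x)=x+1$ and $\ell_\alpha(x)=(x+1)^\alpha$, which is comparable to $n^\alpha$ up to multiplicative constants. Substituting into \eqref{thm} gives, for every $0<\alpha<1-4/K^2<\beta$, positive constants $C_\alpha,C_\beta$ with $\frac{C_\beta}{(n+1)^\beta}\le\pr_{x_0}(\tau_A>n)\le\frac{C_\alpha}{(n+1)^\alpha}$, and absorbing the shift $n+1$ into the constants yields the asserted polynomial bounds. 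I expect the only real obstacle to be the almost-sure inequality $R_n\le X_n$ needed for (A3); everything else is a direct substitution into Theorem~\ref{bound}, and it is worth stressing that no Markov or irreducibility hypothesis is invoked — essential here, since $(X_n)_{n\in\N}$ is not a Markov chain of any order.
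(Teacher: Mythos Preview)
Your proof is correct and follows the same approach as the paper, namely a direct application of Theorem~\ref{bound} after identifying $g\equiv1$, $\sigma^2(x)=\tfrac{K^2}{2}x$, $\lambda=1$, $\theta=4/K^2$, and $\ell_\alpha(x)\sim x^\alpha$; the paper simply asserts the applicability of Theorem~\ref{bound} without spelling out (A1)--(A3), whereas you verify them explicitly via the clean a.s.\ bound $R_n\le X_n$. Note that the displayed inequality in the proposition has the roles of $\alpha$ and $\beta$ interchanged relative to \eqref{thm}; your derived bound $\frac{C_\beta}{(n+1)^\beta}\le\pr_{x_0}(\tau_A>n)\le\frac{C_\alpha}{(n+1)^\alpha}$ is the correct one.
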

\appendix
\section{Proof of Lemma \protect{\ref{verif}}}\label{section:lemma-proof}
In this section, we turn to the proof of our key result, Lemma \ref{verif}.
\begin{proof}[Proof of Lemma \ref{verif}]
We first verify that $(Y_n)_{n\in\N}$ satisfies the first inequality of Lemma \ref{lemfonda}. Let $n\in \N$, then
\begin{align*}
 \esp\left(Y_{n+1}-Y_n\big| \F\right) & = \esp\left(\left(Y_{n+1}-Y_n\right)\left(\mathds{1}_{\{\xi_n\leq -g\left(X_n\right)-\varepsilon X_n\}}+\mathds{1}_{\{\xi_n > -g\left(X_n\right)-\varepsilon X_n\}}\right)\big| \F\right)\\
& \geq -Y_n \pr\left(\xi_n\leq -g\left(X_n\right)-\varepsilon X_n\right)\\
& \quad +\esp\left(\left(Y_{n+1}-Y_n\right)\mathds{1}_{\{\xi_n > -g\left(X_n\right)-\varepsilon X_n\}} \big| \F\right).
\end{align*}
We know that $\pr\left(\xi_n\leq -g\left(X_n\right)-\varepsilon X_n\right)\leq \frac{C\sigma^2\left(X_n\right)}{X_n^2}$ by Chebyshev's inequality. We use the Lagrange remainder of the Taylor series 
\[
G\left(X_{n+1}\right)=G\left(X_n\right)+\left(X_{n+1}-X_n\right)G'\left(X_n\right)+\frac{\left(X_{n+1}-X_n\right)^2}{2}\,G''\left(V_n\right),
\]
with $V_n$ between $X_n$ and $X_{n+1}$ :
\begin{align*}
\esp\left(Y_{n+1}-Y_n\big| \F\right) & \geq -\frac{C_1G\left(X_n\right)\sigma^2\left(X_n\right)}{X_n^2}\\
& \quad +\esp\left(\left(\frac{g\left(X_n\right)+\xi_n}{g\left(X_n\right)}\right)\mathds{1}_{\{\xi_n > -g\left(X_n\right)-\varepsilon X_n\}}\Big|\F\right)\\
& \quad -\esp\left(\frac{\left(g\left(X_n\right)+\xi_n\right)^2g'\left(V_n\right)}{2g^2\left(V_n\right)}\mathds{1}_{\{\xi_n > -g\left(X_n\right)-\varepsilon X_n\}}\Big|\F\right).
\end{align*}
We apply \ref{Gg/xitem} of Lemma \ref{lemmapre} on the first term and since $\esp\left(\xi_n \big|\F\right)=0$, then we can easily check that $\esp\left(\xi_n \mathds{1}_{\{\xi_n > -g\left(X_n\right)-\varepsilon X_n\}}\big|\F\right)>0$ and by \eqref{g'g} and \eqref{xg}, there exists $K>0$ such that $\frac{g'(V_n)}{g^2(V_n)}\leq \frac{K}{X_n g(X_n)}$  :
\begin{align*}
\esp\left(Y_{n+1}-Y_n\big| \F\right) & \geq -C_2-K\esp\left(\frac{\left(g\left(X_n\right)+\xi_n\right)^2}{2g\left(X_n\right)X_n}\Big|\F\right)\\
& \geq -C_2-K\frac{\sigma^2\left(X_n\right)}{2g\left( X_n\right) X_n} \geq -C_3.
\end{align*}
Thus, $(Y_n)_{n \in \N}$ verifies the first inequality of the Lemma \ref{lemfonda}.\\
We now check that there exists $D>0$ such that for all $n\in \N$
\[
 \esp\left(Y_{n+1}^2-Y_n^2\big| \F\right) \leq DY_n.
\]
First, note that
\[
 \esp\left(Y_{n+1}^2-Y_n^2\big| \F\right) \leq \esp\left(\left(Y_{n+1}^2-Y_n^2\right)\mathds{1}_{\{\xi_n>-g\left(X_n\right)-\varepsilon X_n\}}\big| \F\right).
\]
Once again, we use the Lagrange remainder of the Taylor series with $V_n$ between $X_n$ and $X_{n+1}$ : 
\begin{align*}
\esp&\left(Y_{n+1}^2-Y_n^2\big| \F\right) \leq 2G\left(X_n\right) + \esp\left(\frac{2\xi_n}{g\left(X_n\right)}G\left(X_n\right)\mathds{1}_{\{\xi_n>-g\left(X_n\right)-\varepsilon X_n\}}\Big| \F\right)\\
& \qquad + \esp\left(\frac{\left(\xi_n+g\left(X_n\right)\right)^2}{2}\left(\frac{2-2G\left(V_n\right)g'\left(V_n\right)}{g^2\left(V_n\right)}\right) \mathds{1}_{\{\xi_n>-g\left(X_n\right)-\varepsilon X_n\}}\Big| \F\right).
\end{align*}
Since $G(x)g'(x) \rightarrow \frac{1-\lambda}{\lambda}$, there exists $K_1>0$ such that for all $n\in \N$
\begin{align*}
\esp\left(Y_{n+1}^2-Y_n^2\big| \F\right)& \leq 2Y_n - 2\frac{Y_n}{g\left(X_n\right)}\,\esp\left(\xi_n \mathds{1}_{\{\xi_n<-g\left(X_n\right)-\varepsilon X_n\}}\Big| \F\right)\\
& \qquad +K_1\esp\left(\frac{\left(\xi_n+g\left(X_n\right)\right)^2}{g^2\left(V_n\right)} \mathds{1}_{\{\xi_n>-g\left(X_n\right)-\varepsilon X_n\}}\Big| \F\right).
\end{align*}
We have an upper bound for $\esp\left(\frac{\left(\xi_n+g\left(X_n\right)\right)^2}{g^2\left(V_n\right)} \mathds{1}_{\{\xi_n>-g\left(X_n\right)-\varepsilon X_n\}}\big| \F\right)$ by (A2) and by H\"older's inequality : 
\begin{align*}
& \esp\left(\frac{\left(\xi_n+g\left(X_n\right)\right)^2}{g^2\left(V_n\right)} \mathds{1}_{\{\xi_n>-g\left(X_n\right)-\varepsilon X_n\}}\Big| \F\right)\\
&\qquad \leq  \left( \esp\left(|\xi_n+g\left(X_n\right)|^{4+\delta}\big| \F\right)\right)^{\frac{2}{4+\delta}} \left(\esp\left(\frac{\mathds{1}_{\{\xi_n>-g\left(X_n\right)-\varepsilon X_n\}}}{g^{2+\frac{4}{2+\delta}}\left(V_n\right)} \Big| \F\right)\right)^{\frac{2+\delta}{4+\delta}}\\
&\qquad \leq K_2 \sigma^2\left(X_n\right) \left(\esp\left(\frac{V_n^{2+\frac{4}{2+\delta}}\mathds{1}_{\{\xi_n>-g\left(X_n\right)-\varepsilon X_n\}}}{X_n^{2+\frac{4}{2+\delta}}g^{2+\frac{4}{2+\delta}}\left(X_n\right)} \Big| \F\right)\right)^{\frac{2+\delta}{4+\delta}}\\
&\qquad \leq K_2 \frac{\sigma^2\left(X_n\right)}{g^2(X_n)}\left(\esp\left(\frac{X_{n+1}^{2+\frac{4}{2+\delta}}\mathds{1}_{\{\xi_n>-g\left(X_n\right)-\varepsilon X_n\}}}{X_n^{2+\frac{4}{2+\delta}}} \Big| \F\right)\right)^{\frac{2+\delta}{4+\delta}}\\
&\qquad \leq K_3 \frac{\sigma^2\left(X_n\right)}{g^2(X_n)},
\end{align*}
since $2+\frac{4}{2+\delta}\leq 4+\delta$ and $\esp\left(X_{n+1}^{2+\frac{4}{2+\delta}}\Big| \F \right)\leq K_4 X_n^{2+\frac{4}{2+\delta}}$.\\
Therefore, 
\begin{align*}
\esp\left(Y_{n+1}^2-Y_n^2\big| \F\right) & \leq 2Y_n + 2\frac{Y_n}{g\left(X_n\right)}K X_n \pr\left(\xi_n<-g\left(X_n\right)-\varepsilon X_n \big| \F \right)\\
& \qquad +K_5\frac{\sigma^2\left(X_n\right)}{g^2(X_n)}.
\end{align*}
Since $\pr\left(\xi_n<-g\left(X_n\right)-\varepsilon X_n \big| \F \right)\leq \frac{\sigma^2(X_n)}{\varepsilon^2X_n^2}$,
\begin{align*}
\esp\left(Y_{n+1}^2-Y_n^2\big| \F\right)& \leq 2Y_n + 2\frac{Y_n}{g\left(X_n\right)}K X_n \frac{\sigma^2\left(X_n\right)}{\varepsilon^2 X_n^2}+K_5 \frac{\sigma^2\left(X_n\right)}{g^2\left(X_n\right)}\\
& \leq  K_6 Y_n +K_7 \frac{X_n}{g\left(X_n\right)}\\
& \leq DY_n,
\end{align*}
by \ref{Gg/xitem} of Lemma \ref{lemmapre}, thus $(Y_n)_{n\in\N}$ satisfies the assumptions of the Lemma \ref{lemfonda}.
\end{proof}
 \begin{rem}\label{delta2}
  We can now explain why we need Assumption (A3) : since we can take $g(x)=1/x$, we can have $G(x)=x^2$ and then $Y_n^2=X_n^4$, so we need the existence of the fourth moment of $\xi_n$. 
 \end{rem}
\section{Auxiliary results}
In this last section, we recall some results from \cite{aspandiiarov3} that we applied above.\\
We recall that $\mathcal{A}$ is the set of positive function $f$ such that there exists a positive constant $A_f$ such that 
\[
 \underset{x \rightarrow \infty}{\limsup} \frac{f(2x)}{f(x)}\leq A_f.
\]
\begin{thm}[\cite{aspandiiarov3}, Theorem 2]\label{aspan2}
Let $\left(X_n\right)_{n \in \N}$ be an $\F$-adapted stochastic process taking values in an unbounded subset of $\R^+$. Let $f \in \mathcal{A}$ be an ultimately convex function. Suppose there exist positive constants $A_0,\varepsilon$ such that $(f(X_{n \wedge \tau_{A_0}}))_{n \in \N}$ is a supermartingale and for any $n \in \N$, on the event $\{ \tau_{A_0}>n\}$,
\[
\esp(f(X_{n+1})-f(X_n) \big| \F)\leq -\varepsilon f'(X_n).
\]
Then, there exists a positive constant $c$ such that for all $x\geq A_0$,
\[
\esp_x(f(\tau_{A_0}))\leq c f(x).
\]
\end{thm}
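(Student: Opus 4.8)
The plan is to peel the trajectory into "geometric layers" in the $f$-scale and to reduce the statement, layer by layer, to a one-step drift estimate of exactly the kind supplied by the hypothesis; write $\tau=\tau_{A_0}$ throughout and recall that $f\in\mathcal A$ is ultimately convex. First I reduce to the convex regime: after enlarging $A_0$ if necessary, $f$ is differentiable, increasing and convex on $[A_0,\infty)$, so $\delta_0:=f'(A_0)>0$ and, on $\{\tau>n\}$, $X_n>A_0$ forces $f'(X_n)\ge\delta_0$. Hence $\bigl(f(X_{n\wedge\tau})+\varepsilon\delta_0\,(n\wedge\tau)\bigr)_n$ is a nonnegative supermartingale, whence $\esp_x(\tau)\le f(x)/(\varepsilon\delta_0)<\infty$ and $\tau<\infty$ a.s.; and more precisely $M_n:=f(X_{n\wedge\tau})+\varepsilon\sum_{k=0}^{(n\wedge\tau)-1}f'(X_k)$ is a nonnegative supermartingale, so that $\esp_x\bigl(\sum_{k<\tau}f'(X_k)\bigr)\le f(x)/\varepsilon$ by monotone convergence. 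Everything now turns on upgrading this "total $f'$-mass" bound into a bound on $\esp_x(f(\tau))$ itself.

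Fix $x>A_0$ and cut the path at the stopping times $T_0=0$, $T_{j+1}=\inf\{n\ge T_j:\ f(X_n)\le\tfrac12 f(X_{T_j})\}$, each truncated at $\tau$; since $f(X_{n\wedge\tau})$ is a supermartingale converging to $f(X_\tau)\le f(A_0)$, at most $N(x)\asymp\log_2\!\bigl(f(x)/f(A_0)\bigr)$ of the increments $T_{j+1}-T_j$ are nonzero and $f(X_{T_j})\le 2^{-j}f(x)$. On the excursion $[T_j,T_{j+1})$ we have $f(X_n)>\tfrac12 f(X_{T_j})$, hence $f'(X_n)\ge f'\bigl(f^{-1}(\tfrac12 f(X_{T_j}))\bigr)$ by convexity. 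The key, and hardest, estimate is that on $\{T_j<\tau\}$,
\[
\esp\bigl(f(T_{j+1}-T_j)\,\big|\,\mathcal F_{T_j}\bigr)\ \le\ K\,X_{T_j}\ \le\ K\,f^{-1}\!\bigl(2^{-j}f(x)\bigr),
\]
the right side decaying geometrically in $j$. To get it I apply the classical hitting-time-moment argument to $V=f(X)$: inside the excursion the hypothesis reads $\esp(V_{n+1}-V_n\mid\mathcal F_n)\le-\phi(V_n)$ with $\phi(w)=\varepsilon f'\bigl(f^{-1}(w)\bigr)$, and the supermartingale computation underlying the Douc--Fort--Moulines criterion (Proposition \ref{prop}) — which uses only this drift inequality along paths, so applies to our $\mathcal F_n$-adapted, not necessarily Markov, process — gives $\esp\bigl(\sum_{k<T_{j+1}-T_j}r_\phi(k)\mid\mathcal F_{T_j}\bigr)\lesssim 1+\Phi\bigl(f(X_{T_j})\bigr)$, where $\Phi(w)=\int_1^w\!du/\phi(u)$ and $r_\phi=\phi\circ\Phi^{-1}$. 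The substitution $u=f(s)$ gives $\Phi(w)=\tfrac1\varepsilon\bigl(f^{-1}(w)-f^{-1}(1)\bigr)$, hence $\Phi^{-1}(t)\asymp f(\varepsilon t)$ and $r_\phi(t)\asymp\varepsilon f'(\varepsilon t)$, so $\sum_{k<\sigma}r_\phi(k)\asymp f(\varepsilon\sigma)\ge c\,f(\sigma)$ by $f\in\mathcal A$ (with $f'$ increasing); combined with $\Phi(f(X_{T_j}))\asymp X_{T_j}$, this is the displayed bound.

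It remains to reassemble $\esp_x(f(\tau))$ from $\tau=\sum_{j\ge0}(T_{j+1}-T_j)$, and here a little care is needed, since $f$ convex is superadditive and the naive $f(\sum_i a_i)\le C\,m^{q}\sum_i f(a_i)$ coming from $f\in\mathcal A$ only loses a harmless factor when $f$ is strictly superlinear. In general one uses the polynomial growth $t\lesssim f(t)\lesssim t^{p_0}$, $p_0=\log_2 A_f$, and a Minkowski-type estimate in $L^{p_0}$: the displayed bound controls the $L^{p_0}$-norm of the $j$-th increment by a geometrically small quantity, so $\|\tau\|_{p_0}\le\sum_j\|T_{j+1}-T_j\|_{p_0}$ is finite and of order at most (a constant times) $f^{-1}(f(x))^{1/p_0}$, giving $\esp_x(f(\tau))\lesssim 1+\esp_x(\tau^{p_0})\lesssim 1+f^{-1}(f(x))\le c\,f(x)$ for $x>A_0$, the last step because $f$ is at least linear and $\ge f(A_0)>0$. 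One separately checks the borderline case of $f$ linear (where Step 1 already gives the conclusion) and $x$ in a bounded ratio to $A_0$ (where only boundedly many layers occur, so finiteness from Step 1 suffices).

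The main obstacle is that Theorem \ref{aspan2} assumes nothing about the jump sizes of $(X_n)$ — unlike in Lemma \ref{verif} one cannot Taylor-expand $f(X_{n+1})$ around $X_n$ — so the entire argument must be run at the level of the scalar supermartingale $f(X_n)$ and the single scalar drift inequality; that inequality, through the convexity of $f$, does implicitly constrain the jumps, but extracting this is exactly the content of the per-layer $f$-moment estimate, which is where essentially all the work sits. Convexity of $f$ is used to bound $f'$ along each excursion from below by a quantity comparable to the $f$-value at its top; the $\mathcal A$-property is used both to pass from $f(\varepsilon\sigma)$ to $f(\sigma)$ and to recombine the layers without losing more than constants; and the power case $f(x)=x^p$, $p\ge1$, already displays both ingredients as genuinely necessary.
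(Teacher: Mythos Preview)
The paper does not prove Theorem~\ref{aspan2}. It is stated in the appendix under ``Auxiliary results'' with the attribution ``\cite{aspandiiarov3}, Theorem~2'', introduced by the sentence ``we recall some results from \cite{aspandiiarov3} that we applied above.'' There is thus no proof in the present paper to compare your attempt against; the result is simply quoted.

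On your argument itself: the overall architecture---geometric layering in the $f$-scale, a per-layer hitting-time moment bound, and a reassembly step---is a natural strategy, and your Step~1 (finiteness of $\tau$ and the bound $\esp_x\bigl(\sum_{k<\tau}f'(X_k)\bigr)\le f(x)/\varepsilon$) is correct. But the two hardest steps are not closed. First, the per-layer estimate $\esp\bigl(f(T_{j+1}-T_j)\mid\mathcal F_{T_j}\bigr)\lesssim X_{T_j}$: you appeal to Proposition~\ref{prop}, but that proposition concerns total-variation convergence of a Markov kernel under a \emph{concave} drift function $\phi$, not a pathwise moment bound on hitting times. The supermartingale computation you allude to does exist, but it uses concavity of $\phi$ in an essential way, and for your $\phi(w)=\varepsilon f'\!\bigl(f^{-1}(w)\bigr)$ concavity is equivalent to $\log f'$ being concave---an extra regularity hypothesis not assumed in Theorem~\ref{aspan2}. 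Second, the reassembly: from the per-layer bound you get $\esp\bigl(f(\sigma_j)\bigr)\lesssim 2^{-j}f(x)$, and you want $\|\sigma_j\|_{p_0}$ with $p_0=\log_2 A_f$; but the $\mathcal A$-property only gives $f(t)\lesssim t^{p_0}$, which is the wrong direction to pass from an $f$-moment to an $L^{p_0}$-moment. So the Minkowski step does not close as written. Both gaps look repairable with additional work, but at present this is a sketch rather than a proof.
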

For all real valued functions $h$, let $\mathcal{B}_h$ be the set of  positive functions $f \in \mathcal{C}^2(0, \infty)$ ultimately concave, such that $\lim_{x \rightarrow \infty}f(x)=\infty$, $\lim_{x \rightarrow \infty}f'(x)=0$, and such that the integral
\[
\displaystyle{\int_1^\infty \frac{f'(x)\dd x}{h\circ r(x)} \quad \text{ converges,}}
\]
with $r(x)=\sup\{y \geq A, f'(x)=h'(y)\}$.
\begin{thm}[\cite{aspandiiarov3}, Theorem 3]\label{aspan3}
Let $\left(X_n\right)_{n\in \N}$ be an $\F$-adapted stochastic process taking values in an unbounded subset of $\R^+$. Let $h\in \mathcal{C}^1\left([0,\infty)\right)$ be a real-valued function such that $h'$ decreases in a neighborhood of $\infty$ and $h'(x) \rightarrow 0$ as $x \rightarrow \infty$. Suppose there exist positive constants $A_0,\varepsilon$ such that $h$ increases on $[A_0,\infty)$ and for any $n \in \N$, on the event $\{ \tau_{A_0}>n\}$,,
\[
\esp(h(X_{n+1})-h(X_n))\leq -\varepsilon h'(X_n).
\]
Then, for any $f \in \mathcal{B}_h$, there exist positive constants $c,A\geq A_0$ such that for all $x\geq A_0$,
\[
\esp_x(f(\tau_A))\leq c h(x).
\]
\end{thm}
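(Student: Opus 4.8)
The plan is to reduce the bound $\esp_x(f(\tau_A))\le c\,h(x)$ to two estimates drawn from the (conditional) drift hypothesis: a Lyapunov bound on $\esp_x\sum_{k<\tau_A}h'(X_k)$ and a Markov-type tail bound on $X_k$, and then to dominate $f(\tau_A)$ by $\sum_{k<\tau_A}f'(k)$, splitting that sum according to whether $X_k$ is below or above the threshold $r(k)$. Before starting I would make two harmless normalizations. Since the drift hypothesis and the function $r$ depend on $h$ only through $h'$ and through increments of $h$, I may add a constant to $h$ so that $h\ge 1$ on $[0,\infty)$; this only decreases the integral defining $\mathcal{B}_h$, so $f$ stays in $\mathcal{B}_h$. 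I also fix $A\ge A_0$ large enough that $h'$ is strictly decreasing on $[A,\infty)$: then, for $x$ large, $r(x)$ is the unique $y\ge A$ with $h'(y)=f'(x)$, so $h'(r(x))=f'(x)$, the map $r$ is increasing with $r(x)\to\infty$, and $h\circ r$ is increasing with $h\circ r(x)\to\infty$. Write $\tau=\tau_A$; since $\tau\le\tau_{A_0}$ the drift hypothesis holds on $\{\tau>n\}$, and $\big(h(X_{n\wedge\tau})\big)_n$ is a supermartingale, bounded below.

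First I would telescope the supermartingale inequality up to $N\wedge\tau$: for every $N$,
\[
\esp_x\!\big(h(X_{N\wedge\tau})\big)+\varepsilon\,\esp_x\!\Big(\sum_{k=0}^{N\wedge\tau-1}h'(X_k)\Big)\le h(x),
\]
which gives at once $\esp_x\big(\sum_{k=0}^{N\wedge\tau-1}h'(X_k)\big)\le h(x)/\varepsilon$ and $\esp_x(h(X_{N\wedge\tau}))\le h(x)$. From the second bound and Markov's inequality, for $n$ large,
\[
\pr_x\big(\tau>n,\ X_n>r(n)\big)\le\pr_x\big(h(X_{n\wedge\tau})>h(r(n))\big)\le\frac{h(x)}{h(r(n))}.
\]

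Next I would use that $f$ is ultimately concave, so $f(k+1)-f(k)\le f'(k)$ for $k\ge k_1$, together with the elementary dichotomy: on $\{\tau>k\}$ with $k$ large, either $X_k\le r(k)$, in which case $h'(X_k)\ge h'(r(k))=f'(k)$ since $h'$ decreases, or $X_k>r(k)$. Hence, for every $N$,
\[
f(\tau\wedge N)\ \le\ C_0+\sum_{k=0}^{\tau\wedge N-1}h'(X_k)+\sum_{k=k_1}^{N-1}f'(k)\,\mathds{1}_{\{\tau>k,\ X_k>r(k)\}},
\]
with $C_0$ absorbing the finitely many initial terms. Taking $\esp_x$ and invoking the two estimates above,
\[
\esp_x\!\big(f(\tau\wedge N)\big)\ \le\ C_0+\frac{h(x)}{\varepsilon}+h(x)\sum_{k\ge k_1}\frac{f'(k)}{h(r(k))}.
\]
The series is finite: $f'$ is eventually decreasing and $h\circ r$ eventually increasing, so $f'/(h\circ r)$ is eventually monotone and the series is dominated by $\int_1^\infty f'(x)\,\dd x/(h\circ r(x))<\infty$, which is precisely the defining property of $\mathcal{B}_h$. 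Using $h\ge 1$ to absorb $C_0$ into $c\,h(x)$, I get $\esp_x(f(\tau\wedge N))\le c\,h(x)$ uniformly in $N$; letting $N\to\infty$ and applying Fatou's lemma yields $\esp_x(f(\tau_A))\le c\,h(x)$ (and, as a by-product, $\pr_x(\tau_A=\infty)=0$).

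The hard part is the third step: choosing the right splitting of $f(\tau)\approx\sum_{k<\tau}f'(k)$ and recognizing that the ``overshoot'' contribution $\sum_k f'(k)\,\pr_x(\tau>k,X_k>r(k))$, once controlled by the tail estimate, is governed exactly by the integral in the definition of $\mathcal{B}_h$. Everything else — the manipulations with $r$ near infinity, the comparison of the series with the integral, and the trivial case $A_0\le x<A$ in which $\tau_A=0$ — is routine bookkeeping.
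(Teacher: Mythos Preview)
The paper does not prove this statement: Theorem~\ref{aspan3} is only \emph{recalled} in the appendix as Theorem~3 of \cite{aspandiiarov3}, with no argument given. So there is no ``paper's own proof'' to compare against.

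That said, your argument is sound and is, in fact, the standard route to such results. The two ingredients you extract from the drift hypothesis --- the Lyapunov bound $\esp_x\sum_{k<\tau}h'(X_k)\le h(x)/\varepsilon$ and the level-set estimate $\pr_x(\tau>k,\ X_k>r(k))\le h(x)/h(r(k))$ --- are exactly what is needed, and the dichotomy on $\{\tau>k\}$ according to whether $X_k\le r(k)$ or $X_k>r(k)$ is the correct way to convert $\sum_{k<\tau}f'(k)$ into something controlled by those two ingredients. The identification $h'(r(k))=f'(k)$ is precisely what makes the threshold $r(k)$ the right one, and the convergence of $\sum_k f'(k)/h(r(k))$ is then literally the membership condition $f\in\mathcal{B}_h$. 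Your normalizations (shifting $h$ so that $h\ge 1$, enlarging $A$ so that $h'$ is strictly decreasing and $r$ is well defined) are harmless for the reasons you give, and the passage to the limit via Fatou is clean. A minor remark: you rely on $h(r(k))\to\infty$, which is not explicitly assumed, but it is forced by $f\in\mathcal{B}_h$ together with $\int f'=\infty$; it may be worth saying this in one line.
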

We now recall a proposition from \cite{aspandiiarov3} which gives a link between integrability of hitting times of a finite set and of first return times to the initial state.\\
\begin{prop}[Proposition 1, \cite{aspandiiarov3}]\label{hittingreturn}
 Let $F$ be a finite subset of $\mathcal{X}$, $\tau_{F}=\inf \{ n>0, X_n\in F\}$ the hitting time of $F$ and $\tau=\inf\{ n>0, X_n=X_0\}$ be the first return time.\\
i) If for any $z\in F$,
\[
 \esp_z(\tau_F)<\infty,
\]
then for any $z \in F$, $\esp_z(\tau)<\infty$.\\
ii) If for some $z_0 \in F$, we have $\esp_{z_0}(\tau_F)=\infty$, then $\esp_{z_0}(\tau)=\infty$.
\end{prop}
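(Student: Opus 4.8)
Statement (ii) is immediate and I would dispose of it first: if $X_0 = z_0 \in F$, then every time $n \geq 1$ with $X_n = X_0$ automatically satisfies $X_n \in F$, so $\tau \geq \tau_F$ holds $\pr_{z_0}$-a.s., whence $\esp_{z_0}(\tau) \geq \esp_{z_0}(\tau_F) = \infty$. The content is in (i), which I would prove by a regeneration argument along the successive visits of $(X_n)$ to $F$.

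Set $M := \max_{z \in F}\esp_z(\tau_F)$, finite because $F$ is finite; in particular $\pr_z(\tau_F < \infty) = 1$ for every $z \in F$. Define $T_0 = 0$ and $T_{k+1} = \inf\{n > T_k : X_n \in F\}$; by the strong Markov property and induction each $T_k$ is a.s. finite under every $\pr_z$, $z\in F$, and $Z_k := X_{T_k}$ is a Markov chain valued in the finite set $F$. Using irreducibility of $(X_n)$ (which holds here under (A4)), the embedded chain $(Z_k)$ is irreducible on $F$: a positive-probability path of $(X_n)$ from $z$ to $z'$, cut at its successive passages through $F$, exhibits a positive-probability path of $(Z_k)$ from $z$ to $z'$. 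An irreducible Markov chain on a finite state space is positive recurrent, so, for a fixed $z \in F$, the first return time $K := \inf\{k \geq 1 : Z_k = z\}$ of $(Z_k)$ to $z$ satisfies $\esp_z(K) < \infty$, and in particular $K < \infty$ $\pr_z$-a.s.

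To conclude, I would observe that since $z = X_0 \in F$ the set of times $n\geq 1$ at which $(X_n)$ equals $z$ is contained in $\{T_1, T_2, \dots\}$, so that $\tau = T_K = \sum_{k=1}^K (T_k - T_{k-1})$. The event $\{K \geq k\} = \{Z_1 \neq z, \dots, Z_{k-1} \neq z\}$ is $\mathcal{F}_{T_{k-1}}$-measurable, and on it the strong Markov property at $T_{k-1}$ gives $\esp_z\big(T_k - T_{k-1} \,\big|\, \mathcal{F}_{T_{k-1}}\big) = \esp_{Z_{k-1}}(\tau_F) \leq M$. Hence, by Tonelli,
\[
\esp_z(\tau) = \sum_{k \geq 1}\esp_z\big[(T_k - T_{k-1})\mathds{1}_{\{K \geq k\}}\big] \leq M\sum_{k \geq 1}\pr_z(K \geq k) = M\,\esp_z(K) < \infty.
\]

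The last display is a routine Wald-type estimate. The real point — and the step I expect to need the most care — is the positive recurrence of the chain $(Z_k)$ observed at the visits of $(X_n)$ to $F$, i.e. $\esp_z(K) < \infty$; this is exactly where finiteness of $F$ together with irreducibility (hence recurrence) of $(X_n)$ is used, and without such a hypothesis (i) would fail, since a state $z$ from which $(Z_k)$ can be absorbed into a strict sub-class of $F$ would have $\pr_z(\tau = \infty) > 0$ even though $\esp_y(\tau_F) < \infty$ for all $y \in F$.
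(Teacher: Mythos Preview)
The paper does not prove this proposition: it is stated without proof in the appendix of auxiliary results, as a citation of Proposition~1 in \cite{aspandiiarov3}. There is therefore no in-paper argument to compare against.

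Your proof is correct. Part (ii) is the trivial inclusion $\tau \geq \tau_F$ when $X_0\in F$. For part (i), the regeneration argument via the embedded chain $(Z_k)=(X_{T_k})$ on $F$ is the standard one: irreducibility of $(X_n)$ (assumption (A4), under which the paper invokes this proposition) transfers to $(Z_k)$, finiteness of $F$ gives positive recurrence and hence $\esp_z(K)<\infty$, and the Wald-type bound $\esp_z(\tau)=\esp_z\big(\sum_{k=1}^K (T_k-T_{k-1})\big)\leq M\,\esp_z(K)$ is justified exactly as you wrote, since $\{K\geq k\}\in\mathcal{F}_{T_{k-1}}$ and the strong Markov property yields $\esp_z(T_k-T_{k-1}\mid\mathcal{F}_{T_{k-1}})=\esp_{Z_{k-1}}(\tau_F)\leq M$. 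Your closing remark that irreducibility is essential for (i), and is supplied here by (A4) rather than by the proposition's bare statement, is accurate.
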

The following theorem gives the speed of convergence to the invariant measure of probability of $(X_n)_{n\in\N}$ in the recurrent positive case. We first introduce two sets of positive functions.\\
Let $\mathcal{G}$ be the set of positive functions $f$ such that there exists a positive function $h$ such that $h(x)\rightarrow 0$ as $x \rightarrow \infty$ and a positive constant $c$ such that for any positive $m\geq 1$, $x_1\geq 1, \ldots, x_m \geq 1$, 
\[
 f\left(\sum_{k=1}^mx_k\right)\leq c e^{mh(m)}\sum_{k=1}^mf(x_k).
\]
Let $\mathcal{G}'$ be the set of non decreasing in a neighborhood of infinity functions $f$ such that $\ln(f(x))/x$ is non increasing in a neighborhood of infinity and tends to zero when $x$ tends to infinity.
\begin{thm}[Theorem 3, \cite{aspandiiarov3}]\label{fphi}
Let $f\in \mathcal{G}$ such that $f' \in \mathcal{G}'$. Suppose there exists a positive function $\phi$ defined on $\mathcal{X}$ such that for all $s \in \mathcal{X}\setminus F$,
\[
 \esp_s(f(\tau_F))\leq \phi(s),
\]
and also that for all $z \in F$, $\esp_z(\phi(X_1))<\infty$. Then, for any initial distribution $\nu$ on $\mathcal{X}$ such that 
\[
 \esp_\nu(f'(\tau_F))<\infty,
\]
we have
\[
\underset{n \rightarrow \infty}{\lim}f'(n) \sum_{i \in \mathcal{X}} \sum_{j \in \mathcal{X}} \nu(i) |P^n(i,j)-\pi(j)|=0.
\]
\end{thm}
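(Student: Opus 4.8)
The plan is to turn the successive visits of $(X_n)_{n\in\N}$ to a single state into a renewal structure, to read off from the hypotheses a moment bound for the inter-renewal time, and then to feed this into a subgeometric renewal estimate; the classes $\mathcal{G}$ and $\mathcal{G}'$ are tailored so that exactly these two steps go through. (The statement is made in the positive recurrent setting, so $\pi$ exists; when in addition $f$ ultimately dominates the identity, Step~1 below yields $\esp_{z_0}(\tau)<\infty$ and positive recurrence follows from Proposition~\ref{hittingreturn}.)

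\emph{Step 1: reduction to an atom, and a moment bound for its return time.} Fix $z_0\in F$ and set $\sigma=\inf\{n\ge 0:X_n=z_0\}$, $\tau=\inf\{n\ge 1:X_n=z_0\}$. Since $F$ is finite and $(X_n)$ is irreducible and aperiodic, the trace of $(X_n)$ on its successive visits to $F$ is a finite, irreducible, aperiodic Markov chain, so starting from any state of $F$ the number $N$ of $F$-visits needed to reach $z_0$ has a geometric tail, uniformly over the starting point. Writing $\tau$ as a sum of $N$ successive return times $\tau_F^{(1)},\ldots,\tau_F^{(N)}$ to $F$, the property $f\in\mathcal{G}$ gives pathwise $f(\tau)\le c\,e^{N h(N)}\sum_{k=1}^{N}f(\tau_F^{(k)})$; taking expectations, bounding the $f$-moment of each excursion uniformly over the finitely many states of $F$ by means of $\esp_s(f(\tau_F))\le\phi(s)$ and $\esp_z(\phi(X_1))<\infty$, and using $h(N)\to 0$ so that $\esp(N e^{N h(N)})<\infty$ for geometrically-tailed $N$, one obtains $\esp_{z_0}(f(\tau))<\infty$. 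The same computation run from the initial law, waiting first $\tau_F$ and then the part inside $F$, converts $\esp_\nu(f'(\tau_F))<\infty$ into $\esp_\nu(f'(\sigma))<\infty$, using that $f'\in\mathcal{G}'$ is of long-tailed type.

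\emph{Step 2: a subgeometric renewal estimate.} Put $u(n)=\pr_{z_0}(X_n=z_0)$ and $\mu=\esp_{z_0}(\tau)$. The analytic heart of the proof is the statement that, for an aperiodic renewal process whose increment $\tau$ satisfies $\esp(f(\tau))<\infty$ with $f\in\mathcal{G}$ and $f'\in\mathcal{G}'$,
\[
 f'(n)\,\Big|\,u(n)-\tfrac{1}{\mu}\,\Big|\longrightarrow 0\qquad\text{as }n\to\infty,
\]
together with the quantitative control of the convolution $\sum_{k}\big|u(n-k)-\tfrac1\mu\big|\pr_{z_0}(\tau>k)$ needed in Step~3. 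The two inputs are: summation by parts, which rewrites $\esp(f(\tau))<\infty$ as $\sum_n f'(n)\pr_{z_0}(\tau>n)<\infty$ (legitimate because of the regularity of $f'$); and the single-big-jump behaviour of long-tailed increments — the tail of a sum of boundedly many independent copies of $\tau$ is comparable to a multiple of $\pr_{z_0}(\tau>n)$ — which, combined with the closure properties of $\mathcal{G}$ and $\mathcal{G}'$, transfers the control from the increment to the renewal mass function. I expect this renewal estimate to be the main obstacle: it is the only place where the precise shape of $\mathcal{G}$ and $\mathcal{G}'$ is essential, and the convolution must be split with care because, for genuinely subgeometric $f'$, the ratio $f'(n)/f'(n-k)$ need not stay bounded when $k$ is of order $n$.

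\emph{Step 3: assembling the bound.} I would finish with the first-entrance--last-exit decomposition at the atom $z_0$: decomposing a trajectory at its first visit to $z_0$, and decomposing $P^m(z_0,\cdot)-\pi$ at the last visit to $z_0$ before time $m$ with Kac's identity $\pi(j)=\mu^{-1}\sum_{r\ge 0}\pr_{z_0}(X_r=j,\ \tau>r)$, one obtains for every $i$
\[
 \sum_j\big|P^n(i,j)-\pi(j)\big|\le 2\,\pr_i(\sigma>n)+\sum_{k=0}^{n}\pr_i(\sigma=k)\Big[\sum_{r=0}^{n-k}\Big|u(n-k-r)-\tfrac1\mu\Big|\pr_{z_0}(\tau>r)+\tfrac1\mu\,\esp_{z_0}\!\big(\tau\,\mathds{1}_{\{\tau>n-k\}}\big)\Big].
\]
Averaging over $i\sim\nu$ and multiplying by $f'(n)$: the term $f'(n)\pr_\nu(\sigma>n)\le\esp_\nu\!\big(f'(\sigma)\mathds{1}_{\{\sigma>n\}}\big)$ tends to $0$ since $f'$ is non-decreasing and $\esp_\nu(f'(\sigma))<\infty$ by Step~1; the bracket evaluated at $m$ is bounded uniformly in $m$ and is $o(1/f'(m))$ as $m\to\infty$ (for its last summand because $f(\tau)\ge f'(m)(\tau-m)$ on $\{\tau>m\}$ by ultimate convexity and $\esp_{z_0}(f(\tau))<\infty$, for its first summand by Step~2); transferring the factor $f'(n)$ onto the bracket at $n-k$ and using $\esp_\nu(f'(\sigma))<\infty$ to discard the contribution of large $k$, a dominated-convergence argument in $k$ then gives $f'(n)\sum_{i,j}\nu(i)\,|P^n(i,j)-\pi(j)|\to 0$, which is the claim.
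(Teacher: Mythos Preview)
The paper does not prove Theorem~\ref{fphi}: it is listed in the final ``Auxiliary results'' section, which explicitly states ``we recall some results from \cite{aspandiiarov3} that we applied above,'' and the theorem appears there as a quotation from Aspandiiarov et al.\ with no argument attached. There is therefore no proof in the paper to compare your sketch against.

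That said, your outline---reduce to an atom via the finite trace chain on $F$ and the geometric tail of the number of $F$-visits needed to hit $z_0$, use $f\in\mathcal{G}$ to pass from an $f$-moment of $\tau_F$ to an $f$-moment of the return time to $z_0$, invoke a subgeometric renewal theorem, and conclude by first-entrance/last-exit---is the standard route to such results and is consistent with how the classes $\mathcal{G}$ and $\mathcal{G}'$ are set up. If your goal was to supply what the paper omits, the structure is right; the genuinely delicate point, as you yourself flag in Step~2, is the subgeometric renewal estimate, and a full proof would have to carry that out rather than appeal to it.
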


\noindent \textbf{Acknowledgement.}  \\
The author is very grateful to Vincent Bansaye, Jean-Ren\'e Chazottes and Eric Moulines for many helpful discussions on the subject of this paper.
The author acknowledges partial support by the ``Chaire Mod\'elisation Math\'ematique et Biodiversit\'e'' of Veolia Environnement - \'Ecole Polytechnique - Museum National d'Histoire Naturelle - Fondation X.
\bibliographystyle{plain}
\bibliography{synthese}
\end{document}